\documentclass[11pt]{article}
 
\usepackage[margin=0.5in]{geometry} 
\usepackage{amsmath,amsthm,amssymb}
\usepackage{mathrsfs}
\usepackage[T1]{fontenc}
\usepackage[english]{babel}
\usepackage{graphicx}
\usepackage{color} 

\usepackage{hyperref}
\usepackage{afterpage}

\usepackage{color}

\usepackage{mathtools}
\usepackage{atbegshi}
\AtBeginDocument{\AtBeginShipoutNext{\AtBeginShipoutDiscard}}
\usepackage{pdfpages} 
\usepackage{longtable} 
\usepackage{amsmath,amsthm,amsfonts,amssymb,relsize,bm} 
\usepackage{xparse,xcolor} 
\usepackage{graphicx,float} 
\usepackage{etoolbox}
\usepackage[shortlabels]{enumitem} 
\usepackage{tikz-cd} 
\usepackage{cite} 
\usepackage[normalem]{ulem}
\usepackage{comment}
\usepackage{tocloft}

\numberwithin{equation}{section}

\theoremstyle{plain}
\newtheorem{theorem}{Theorem}[section]

\newtheorem*{theorem*}{Theorem}
\newtheorem{lemma}{Lemma}[section]

\theoremstyle{definition}

\newtheorem*{definition*}{Definition}




 

 
 \begin{document}
 
 \title{An analogue of a formula of Popov}

\author{{Pedro Ribeiro}} 
\thanks{
{\textit{ Keywords}} :  {Sums of squares, Bessel functions, Gauss' hypergeometric function}

{\textit{2020 Mathematics Subject Classification} }: {Primary: 11E25, 11M41; Secondary: 33C05, 33C10, 33B15.}

Department of Mathematics, Faculty of Sciences of University of Porto, Rua do Campo Alegre,  687; 4169-007 Porto (Portugal). 

\,\,\,\,\,E-mail of the corresponding author: pedromanelribeiro1812@gmail.com}

\date{}
 
\maketitle

\begin{abstract} Let $r_{k}(n)$ denote the number of representations of the positive
integer $n$ as the sum of $k$ squares. We prove a new summation formula involving $r_{k}(n)$ and the Bessel functions of the first kind, which constitutes an
analogue of a result due to the Russian mathematician
A. I. Popov.
\end{abstract}

\pagenumbering{arabic}

\section{Introduction}
The insights contained in Ramanujan's lost notebook \cite{Ramanujan_lost} still
inspire a lot of new mathematical ideas. Recently, the discovery of
some overlooked works from the Russian school, namely from the mathematician
N. S. Koshliakov \cite{KOSHLIAKOV} has produced a similar effect on
mathematical research (cf. \cite{DG, Ramanujan_meets, Kosh_Petronilho}). 
Belonging to the same tradition as Koshliakov, the mathematician and
linguist Alexander Ivanovich Popov (1899-1973) made his contribution
to Mathematics by providing new interesting summation formulas. 

In a fairly unknown paper \cite{popov_russian}, Popov states the following beautiful result. If $\text{Re}(x)>0$ and $z\in\mathbb{C}$, then
\begin{align}
\frac{z^{\frac{k}{2}-1}\pi^{\frac{k}{4}-\frac{1}{2}}x^{\frac{k}{4}}}{2^{\frac{k}{2}-1}\Gamma\left(\frac{k}{2}\right)}\,e^{z^{2}/8}+\sqrt{x}\,e^{z^{2}/8}\,\sum_{n=1}^{\infty}\frac{r_{k}(n)}{n^{\frac{k}{4}-\frac{1}{2}}}\,e^{-\pi nx}\,J_{\frac{k}{2}-1}(\sqrt{\pi nx}z)\nonumber \\
=\frac{z^{\frac{k}{2}-1}\pi^{\frac{k}{4}-\frac{1}{2}}x^{-\frac{k}{4}}}{2^{\frac{k}{2}-1}\Gamma\left(\frac{k}{2}\right)}\,e^{-z^{2}/8}+\frac{e^{-z^{2}/8}}{\sqrt{x}}\,\sum_{n=1}^{\infty}\frac{r_{k}(n)}{n^{\frac{k}{4}-\frac{1}{2}}}\,e^{-\frac{\pi n}{x}}\,I_{\frac{k}{2}-1}\left(\sqrt{\frac{\pi n}{x}}z\right),\label{Popov intro}
\end{align}
where $r_{k}(n)$ denotes the number of representations of the positive
integer $n$ as a sum of $k$ squares and, as usual, $J_{\nu}(z)$
and $I_{\nu}(z)$ respectively denote the Bessel and modified Bessel
functions of the first kind. 
A couple of reasons why this identity
is fascinating are already provided by Berndt, Dixit, Kim and Zaharescu
in {[}\cite{berndt_popov}, pp. 3795-3796{]}. For the purposes of our discussion, we
shall enumerate them.
\begin{enumerate}
\item If we construct the Dirichlet series attached to $r_{k}(n)$, 
\begin{equation}
\zeta_{k}(s)=\sum_{n=1}^{\infty}\frac{r_{k}(n)}{n^{s}},\,\,\,\,\text{Re}(s)>\frac{k}{2},\label{definition zeta k}
\end{equation}
then $\zeta_{k}(s)$ can be continued to the complex plane as a meromorphic
function possessing only a simple pole at $s=\frac{k}{2}$ with residue
$\pi^{k/2}/\Gamma(k/2)$. Moreover, it satisfies the functional equation
\begin{equation}
\pi^{-s}\Gamma\left(s\right)\zeta_{k}(s)=\pi^{s-\frac{k}{2}}\Gamma\left(\frac{k}{2}-s\right)\zeta_{k}\left(\frac{k}{2}-s\right).\label{functional equation zetak}
\end{equation}
Note that, when $k=1$, $r_{1}(n)=2$ if and only if $n$ is a perfect
square and zero otherwise. Therefore, (\ref{definition zeta k}) reduces
to 
\begin{equation}
\zeta_{1}(s):=\sum_{n=1}^{\infty}\frac{r_{1}(n)}{n^{s}}=2\sum_{n=1}^{\infty}\frac{1}{n^{2s}}=2\zeta(2s),\,\,\,\,\text{Re}(s)>\frac{1}{2}.\label{zeta 1(s) definition}
\end{equation}
Furthermore, (\ref{functional equation zetak}) with $k=1$ gives the functional equation
for Riemann's $\zeta-$function
\begin{equation}
\pi^{-s}\Gamma\left(s\right)\zeta(2s)=\pi^{s-\frac{1}{2}}\Gamma\left(\frac{1}{2}-s\right)\zeta\left(1-2s\right).\label{Functional equation Riemann Popov}
\end{equation}
The first point highlighted in \cite{berndt_popov} is that the powers of $n$
in the denominators of both sides of (\ref{Popov intro}) are remindful
of the functional equation (\ref{functional equation zetak}).
\item Riemann's second proof of the functional equation for $\zeta(s)$,
(\ref{Functional equation Riemann Popov}), employs the transformation
formula for Jacobi's $\theta-$function,
\begin{equation}
\theta(x):=\sum_{n\in\mathbb{Z}}e^{-\pi n^{2}x}=\frac{1}{\sqrt{x}}\sum_{n\in\mathbb{Z}}e^{-\frac{\pi n^{2}}{x}}:=\frac{1}{\sqrt{x}}\theta\left(\frac{1}{x}\right),\,\,\,\,\text{Re}(x)>0.\label{jacobi theta intro popov}
\end{equation}
The theta transformation formula associated to the Dirichlet series
$\zeta_{k}(s)$ can be obtained by taking the $k^{\text{th}}$ power
on both sides of (\ref{jacobi theta intro popov}). This results in
the transformation
\begin{equation}
\sum_{n=0}^{\infty}r_{k}(n)\,e^{-\pi nx}=x^{-\frac{k}{2}}\sum_{n=0}^{\infty}r_{k}(n)\,e^{-\frac{\pi n}{x}},\,\,\,\,\text{Re}(x)>0.\label{theta k squares formula}
\end{equation}
Of course, the exponential factors on both sides of (\ref{Popov intro})
remind us the theta transformation formula (\ref{theta k squares formula}).
In fact, (\ref{theta k squares formula}) is a particular case of
(\ref{Popov intro}) when we let $z\rightarrow0$, due to the limiting relations for the Bessel functions {[}\cite{NIST}, p. 223,
eq. (10.7.3){]}
\[
\lim_{y\rightarrow0}y^{-\nu}J_{\nu}(y)=\lim_{y\rightarrow0}y^{-\nu}I_{\nu}(y)=\frac{2^{-\nu}}{\Gamma(\nu+1)}.
\]
\item Chandrasekharan and Narasimhan {[}\cite{arithmetical identities}, p. 19, eq. (65){]} proved yet
another equivalent identity to (\ref{functional equation zetak})
and (\ref{theta k squares formula}). If $x>0$ and $q>\frac{k-1}{2}$,
then
\begin{equation}
\frac{1}{\Gamma(q+1)}\,\sum_{0\leq n\leq x}{}^{^{\prime}}r_{k}(n)\,(x-n)^{q}=\frac{\pi^{\frac{k}{2}}x^{\frac{k}{2}+q}}{\Gamma\left(q+1+\frac{k}{2}\right)}+\pi^{-q}\sum_{n=1}^{\infty}r_{k}(n)\left(\frac{x}{n}\right)^{\frac{k}{4}+\frac{q}{2}}J_{\frac{k}{2}+q}\left(2\pi\sqrt{nx}\right),\label{Bessel expansion riesz sums}
\end{equation}
where the Bessel series on the right-hand side converges absolutely.
The prime on the summation sign indicates that, if $q=0$ and $x$
is an integer, then the last contribution in this Riesz sum is just
$\frac{1}{2}r_{k}(x)$. The appearance of the Bessel functions in
(\ref{Popov intro}) reminds us of (\ref{Bessel expansion riesz sums}).
\end{enumerate}

Berndt, Dixit, Kim and Zaharescu emphasized the importance of Popov's
result by explaining its connection with the formulas stated in the previous three items. We
would like to add another item to the list, which is perhaps more
directly related to the second item above.  When $k=1$, (\ref{Popov intro}) gives the identity
\[
x^{\frac{1}{4}}\,e^{z^{2}/8}\left\{ 1+2\sum_{n=1}^{\infty}e^{-\pi n^{2}x}\,\cos\left(\sqrt{\pi x}\,nz\right)\right\} =x^{-\frac{1}{4}}\,e^{-z^{2}/8}\left\{ 1+2\sum_{n=1}^{\infty}e^{-\frac{\pi n^{2}}{x}}\,\cosh\left(\sqrt{\frac{\pi}{x}}\,nz\right)\right\}, 
\]
which is mainly due to the particular cases of the Bessel functions [\cite{temme_book}, p. 248],
\[
J_{-\frac{1}{2}}(x)=\sqrt{\frac{2}{\pi x}}\,\cos(x),\,\,\,\,I_{-\frac{1}{2}}(x)=\sqrt{\frac{2}{\pi x}}\,\cosh(x).
\]

Aiming to study the zeros of shifted combinations of Dirichlet series \cite{ryce_published}, Yakubovich and the author of this note have
discovered, independently of Popov, 
an extension of (\ref{Popov intro}) to a class of Hecke Dirichlet
series.\footnote{Such extension was stated for the first time by Berndt in \cite{dirichletserisV}.
Berndt used a generalized version of Vorono\"i's summation formula.} The main
idea in \cite{ryce_published} is that (\ref{Popov intro}) can be achieved through
an integral representation of the form\footnote{When $k=1$, (\ref{integral formula ralpha given intro}) gives a formula due to Dixit {[}\cite{Dixit_theta}, p. 374, eq. (1.13){]}.}
\begin{align}
&\sqrt{x}\,e^{z^{2}/8}\,\sum_{n=1}^{\infty}\frac{r_{k}(n)}{n^{\frac{k}{4}-\frac{1}{2}}}\,e^{-\pi nx}\,J_{\frac{k}{2}-1}(\sqrt{\pi nx}\,z)-\frac{\pi^{\frac{k}{4}-\frac{1}{2}}z^{\frac{k}{2}-1}e^{-z^{2}/8}}{2^{\frac{k}{2}-1}\Gamma\left(\frac{k}{2}\right)x^{\frac{k}{4}}}\nonumber \\
=&\frac{e^{-z^{2}/8}}{\sqrt{x}}\,\sum_{n=1}^{\infty}\frac{r_{k}(n)}{n^{\frac{k}{4}-\frac{1}{2}}}\,e^{-\frac{\pi n}{x}}\,I_{\frac{k}{2}-1}\left(\sqrt{\frac{\pi n}{x}}z\right)-\,\frac{\pi^{\frac{k}{4}-\frac{1}{2}}z^{\frac{k}{2}-1}x^{\frac{k}{4}}\,e^{z^{2}/8}}{2^{\frac{k}{2}-1}\Gamma\left(\frac{k}{2}\right)}\nonumber \\
=&\left(\frac{\sqrt{\pi}z}{2}\right)^{\frac{k}{2}-1}\frac{e^{z^{2}/8}}{2\pi\Gamma\left(\frac{k}{2}\right)}\,\intop_{-\infty}^{\infty}\pi^{-\frac{k}{4}-it}\Gamma\left(\frac{k}{4}+it\right)\zeta_{k}\left(\frac{k}{4}+it\right)\,_{1}F_{1}\left(\frac{k}{4}+it;\,\frac{k}{2};\,-\frac{z^{2}}{4}\right)\,x^{-it}\,dt.\label{integral formula ralpha given intro}
\end{align}

The main advantage of the representation (\ref{integral formula ralpha given intro}) is that Popov's formula can be now expressed as a corollary of the symmetries of an integral involving the important Dirichlet series $\zeta_{k}(s)$. 
When it comes to study analogues of a summation formula and the connection
between it and the behavior of the associated Dirichlet series,
it is very important to keep a certain symmetry in the analytic structure
of both sides of the identity, which is helpful to preserve the ``modular
shape'' of it. 

Looking at Popov's formula (\ref{Popov intro}), one may see that
the indices of the Bessel functions appearing at both sides of it depend on $k$. Therefore, in our search for a new analogue of (\ref{Popov intro}), it is not unreasonable to start with the study of a more general series of the form
\begin{equation}
\sum_{n=1}^{\infty}\frac{r_{k}(n)}{n^{\frac{\nu}{2}}}\,e^{-\pi nx}\,J_{\nu}(\sqrt{\pi nx}z),\label{series generalist}
\end{equation}
where $\nu$ is some complex parameter which is independent of $k$.
Getting a formula for the series (\ref{series generalist}) would
provide a generalization of Popov's formula and, hopefully, furnish
a beautiful analogue of it. The study of this series, however, ultimately leads to
a huge disappointment.

In fact, as it can be seen in \cite{ribeiro_product_bessel}, for any $\text{Re}(\nu)>-1$ and
$\text{Re}(x)>0,\,\,z\in\mathbb{C}$, the following summation formula
holds
\begin{align}
\frac{x^{\frac{\nu+1}{2}}z^{\nu}\pi^{\frac{\nu}{2}}}{2^{\nu}\Gamma(\nu+1)}e^{z^{2}/8}&+\sqrt{x}e^{z^{2}/8}\sum_{n=1}^{\infty}\frac{r_{k}(n)}{n^{\frac{\nu}{2}}}\,e^{-\pi xn}\,J_{\nu}(\sqrt{\pi nx}z) =\frac{\pi^{\frac{\nu}{2}}x^{\frac{\nu-k+1}{2}}z^{\nu}}{2^{\nu}\Gamma(\nu+1)}e^{z^{2}/8}\,{}_{1}F_{1}\left(\frac{k}{2};\,\nu+1;\,-\frac{z^{2}}{4}\right)\nonumber \\
&+\frac{x^{\frac{\nu-k+1}{2}}z^{\nu}\pi^{\frac{\nu}{2}}}{2^{\nu}\Gamma(\nu+1)}\,e^{-z^{2}/8}\,\sum_{n=1}^{\infty}r_{k}(n)e^{-\frac{\pi n}{x}} \,\Phi_{3}\left(1-\frac{k}{2}+\nu;\,\nu+1;\,\frac{z^{2}}{4},\,\frac{\pi z^{2}n}{4x}\right),\label{formula for Phi3}
\end{align}
where $\Phi_{3}(b;c;w,u)$ is the usual Humbert function,
\[
\Phi_{3}(b;c;\,w,u)=\sum_{k,m=0}^{\infty}\frac{(b)_{k}}{(c)_{k+m}}\frac{w^{k}u^{m}}{k!\,m!}.
\]

\bigskip{}

Why is (\ref{formula for Phi3}) not as beautiful as (\ref{Popov intro})?
One reason for this is perhaps the generality of (\ref{formula for Phi3}), as (\ref{formula for Phi3}) implies Popov's formula when $\nu=\frac{k}{2}-1$. The symmetries of (\ref{Popov intro}) are lost once we pick a general index for the Bessel function
$J_{\nu}(z)$.

Besides the loss of symmetry, another reason why (\ref{formula for Phi3})
is not appealing at all is that the analytical structures of both sides
of it are drastically different. We have started the summation formula at
the left with the Bessel function of the first kind, $J_{\nu}(z)$
(which, in hypergeometric terms, is nothing but a function of the
form $_{0}F_{1}$), but we ended up on the right with a confluent hypergeometric
function of two variables!

\bigskip{}

Although this first attempt fails in bringing an interesting analogue of (\ref{Popov intro}), we note that, if we slightly change the argument and indices of the Bessel functions in (\ref{Popov intro}), we can achieve a new analogue of Popov's result. This is stated as our main result.

\begin{theorem}\label{analogue theorem popov}
If $x>y>0$, then we
have the transformation formula
\begin{align}
\frac{(\pi y)^{\frac{k}{4}-\frac{1}{2}}}{2^{\frac{k}{4}-\frac{1}{2}}\Gamma\left(\frac{k}{4}+\frac{1}{2}\right)}&+\sum_{n=1}^{\infty}\frac{r_{k}(n)}{n^{\frac{k}{4}-\frac{1}{2}}}\,e^{-\pi nx}J_{\frac{k}{4}-\frac{1}{2}}\left(\pi ny\right)\nonumber \\
=\frac{(\pi y)^{\frac{k}{4}-\frac{1}{2}}}{2^{\frac{k}{4}-\frac{1}{2}}\Gamma\left(\frac{k}{4}+\frac{1}{2}\right)\left(x^{2}+y^{2}\right)^{\frac{k}{4}}}&+\frac{1}{\sqrt{x^{2}+y^{2}}}\,\sum_{n=1}^{\infty}\frac{r_{k}(n)}{n^{\frac{k}{4}-\frac{1}{2}}}\,e^{-\frac{\pi nx}{x^{2}+y^{2}}} J_{\frac{k}{4}-\frac{1}{2}}\left(\frac{\pi ny}{x^{2}+y^{2}}\right).\label{Formula to prove analogue Popov}
\end{align}
Moreover, under the same conditions, 
\begin{align}
\frac{(\pi y)^{\frac{k}{4}-\frac{1}{2}}}{2^{\frac{k}{4}-\frac{1}{2}}\Gamma\left(\frac{k}{4}+\frac{1}{2}\right)}&+\ensuremath{\sum_{n=1}^{\infty}\frac{r_{k}(n)}{n^{\frac{k}{4}-\frac{1}{2}}}\,e^{-\pi nx}I_{\frac{k}{4}-\frac{1}{2}}\left(\pi ny\right)}\nonumber \\
=\frac{(\pi y)^{\frac{k}{4}-\frac{1}{2}}}{2^{\frac{k}{4}-\frac{1}{2}}\Gamma\left(\frac{k}{4}+\frac{1}{2}\right)\left(x^{2}-y^{2}\right)^{\frac{k}{4}}}&+\frac{1}{\sqrt{x^{2}-y^{2}}}\sum_{n=1}^{\infty}\frac{r_{k}(n)}{n^{\frac{k}{4}-\frac{1}{2}}}\,e^{-\frac{\pi nx}{x^{2}-y^{2}}} I_{\frac{k}{4}-\frac{1}{2}}\left(\frac{\pi ny}{x^{2}-y^{2}}\right).\label{formula for modified bessel}
\end{align}
\end{theorem}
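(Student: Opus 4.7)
The plan is to reduce both identities of Theorem \ref{analogue theorem popov} to the theta transformation (\ref{theta k squares formula}), extended analytically to complex arguments, by means of Poisson's integral representation
$$
J_\nu(z)=\frac{(z/2)^\nu}{\sqrt{\pi}\,\Gamma(\nu+\tfrac{1}{2})}\int_{-1}^{1}(1-t^2)^{\nu-\frac{1}{2}}\,e^{izt}\,dt,\qquad \mathrm{Re}(\nu)>-\tfrac{1}{2}.
$$
With $\nu=k/4-1/2$ this strips the Bessel factor from every term of the series on the left of (\ref{Formula to prove analogue Popov}) and, after an absolutely convergent Fubini swap, rewrites that series as the $t$-integral of $\sum_{n\ge1}r_k(n)e^{-\pi n(x-iyt)}$ against the kernel $(1-t^2)^{k/4-1}$, carrying the prefactor $(\pi y/2)^{k/4-1/2}/[\sqrt{\pi}\,\Gamma(k/4)]$. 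Since $\mathrm{Re}(x-iyt)=x>0$ for every $t\in[-1,1]$, the theta transformation continues analytically to $w=x-iyt$ and yields $\sum_{n\ge1}r_k(n)e^{-\pi nw}=w^{-k/2}-1+w^{-k/2}\sum_{n\ge1}r_k(n)e^{-\pi n/w}$.

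Substituting this decomposition splits the integral into three contributions. The $-1$ term collapses to the Beta evaluation $B(k/4,1/2)=\sqrt{\pi}\,\Gamma(k/4)/\Gamma(k/4+1/2)$, producing a constant that cancels the first term on the left-hand side of (\ref{Formula to prove analogue Popov}). The $w^{-k/2}$ term is linearised by $(x-iyt)^{-k/2}=\Gamma(k/2)^{-1}\int_0^{\infty}s^{k/2-1}e^{-(x-iyt)s}\,ds$; the inner $t$-integral then recovers a Bessel factor, and the outer $s$-integral is evaluated by $\int_0^\infty s^{k/4-1/2}e^{-xs}J_{k/4-1/2}(ys)\,ds=(2y)^{k/4-1/2}\Gamma(k/4)/[\sqrt{\pi}(x^2+y^2)^{k/4}]$, after which Legendre's duplication formula delivers precisely the first term on the right-hand side of (\ref{Formula to prove analogue Popov}). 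The surviving piece
$$
B:=\frac{(\pi y/2)^{k/4-1/2}}{\sqrt{\pi}\,\Gamma(k/4)}\sum_{n=1}^{\infty}r_k(n)\int_{-1}^{1}(1-t^2)^{k/4-1}(x-iyt)^{-k/2}e^{-\pi n/(x-iyt)}\,dt
$$
is the crux of the argument and must be evaluated term by term.

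For each fixed $n$, I would combine two substitutions. First, $w=x-iyt$ turns the inner integral into a complex line integral along the vertical segment from $x+iy$ to $x-iy$, with the weight becoming $y^{2-k/2}[(w-(x+iy))(w-(x-iy))]^{k/4-1}$. Second, the inversion $w=1/v$ maps this segment onto the segment joining $x'\mp iy'$ about the inverted point $(x',y'):=\bigl(x/(x^2+y^2),\,y/(x^2+y^2)\bigr)$, and the algebraic identity
$$
(w-(x+iy))(w-(x-iy))=\frac{(x^2+y^2)\bigl[(v-x')^2+y'^2\bigr]}{v^2},
$$
together with $dw=-v^{-2}dv$, collapses all spurious factors. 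Parametrising $v=x'-iy's$ with $s\in[-1,1]$ reduces the integral to $e^{-\pi nx'}\int_{-1}^{1}(1-s^2)^{k/4-1}e^{i\pi ny's}\,ds$; running Poisson's representation backwards now identifies this as a multiple of $J_{k/4-1/2}(\pi ny')$, and the simplification $(y/y')^{k/4-1/2}=(x^2+y^2)^{k/4-1/2}$ recovers exactly the Bessel series on the right-hand side of (\ref{Formula to prove analogue Popov}).

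The identity (\ref{formula for modified bessel}) is proved by the same scheme with the analogous Poisson representation $I_\nu(z)=(z/2)^\nu[\sqrt{\pi}\,\Gamma(\nu+\tfrac{1}{2})]^{-1}\int_{-1}^{1}(1-t^2)^{\nu-1/2}e^{zt}\,dt$; the complex contour is replaced by the real segment $[x-y,x+y]$, the standing assumption $x>y>0$ is precisely what keeps $x-yt$ positive on that segment and legitimises the theta transformation, and the Laplace evaluation $\int_0^\infty s^{k/4-1/2}e^{-xs}I_{k/4-1/2}(ys)\,ds=(2y)^{k/4-1/2}\Gamma(k/4)/[\sqrt{\pi}(x^2-y^2)^{k/4}]$ produces the denominator $(x^2-y^2)^{k/4}$. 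I anticipate the main technical obstacle to be the careful justification of the three Fubini-type interchanges of sum and integral, together with the verification that the principal branch of $w^{-k/2}$ transports consistently under the inversion $w=1/v$; the strict inequalities $x>0$ (respectively $x>y$) are exactly what guarantees that every contour stays in the right half-plane and no branch cut is crossed.
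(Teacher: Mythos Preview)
Your approach is correct in outline and genuinely different from the paper's. The paper writes $e^{-\pi nx}J_{k/4-1/2}(\pi ny)$ as a Mellin--Barnes integral involving ${}_2F_1$, sums over $n$ to produce a contour integral containing $\zeta_k$, shifts the line of integration (picking up two residues), and then applies the functional equation of $\zeta_k$ together with Euler's transformation ${}_2F_1(a,b;c;z)=(1-z)^{c-a-b}{}_2F_1(c-a,c-b;c;z)$ to rebuild the transformed series. You replace all of this with the Poisson integral and the theta transformation $\theta^k(w)=w^{-k/2}\theta^k(1/w)$ applied pointwise in $t$; the inversion $w\mapsto 1/w$ then does the work that the functional equation of $\zeta_k$ does in the paper. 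Your route is more elementary (no ${}_2F_1$ asymptotics on vertical lines, no convexity bounds for $\zeta_k$) and makes the modular origin of (\ref{Formula to prove analogue Popov}) transparent.

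There is, however, one genuine gap in the $J$-case. The inversion $w=1/v$ does \emph{not} send the vertical segment $\{x-iyt:t\in[-1,1]\}$ to a vertical segment: the line $\mathrm{Re}\,w=x$ does not pass through $0$, so its image under $v=1/w$ is the circle $|v-\tfrac{1}{2x}|=\tfrac{1}{2x}$, and your segment maps to the arc on that circle from $x'-iy'$ to $x'+iy'$ through $1/x\neq x'$. To reach the parametrisation $v=x'-iy's$ you need an additional Cauchy deformation from arc to chord. This is legitimate---after your algebraic identity and the cancellation of all $v$-powers, the integrand is a constant times $[(v-x')^2+y'^2]^{k/4-1}e^{-\pi nv}$, holomorphic in the lune between arc and chord (the only branch points $x'\pm iy'$ sit at the common endpoints), with integrable endpoint behaviour for every $k\ge1$---but the step must be stated and the branch of the $(k/4-1)$-th power tracked through the deformation. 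Your argument for (\ref{formula for modified bessel}) is not affected: there $w=x-yt$ stays real and positive, and inversion honestly carries $[x-y,x+y]$ onto $[x'-y',x'+y']$.
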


\bigskip{}

The reasons for the interest of (\ref{Formula to prove analogue Popov})
and (\ref{formula for modified bessel}) are almost the same as the
reasons why (\ref{Popov intro}) is fascinating. Due to the appearance
of the Bessel functions $J_{\nu}(z)$ and $I_{\nu}(z)$ and the exponential
factors, as well as the same powers on the denominators of both sides
of (\ref{Formula to prove analogue Popov}), our identity matches
the criteria mentioned in items 1-3. Moreover, if we define a generalized
$\theta-$function in the following way
\[
\Theta_{k}\left(x,y\right):=\frac{(\pi y)^{\frac{k}{4}-\frac{1}{2}}}{2^{\frac{k}{4}-\frac{1}{2}}\Gamma\left(\frac{k}{4}+\frac{1}{2}\right)}+\sum_{n=1}^{\infty}\frac{r_{k}(n)}{n^{\frac{k}{4}-\frac{1}{2}}}\,e^{-\pi nx}J_{\frac{k}{4}-\frac{1}{2}}\left(\pi ny\right),
\]
then (\ref{Formula to prove analogue Popov}) implies the transformation
formula
\begin{equation}
\Theta_{k}\left(x,y\right)=\frac{1}{\sqrt{x^{2}+y^{2}}}\Theta_{k}\left(\frac{x}{x^{2}+y^{2}},\frac{y}{x^{2}+y^{2}}\right), \label{general theta Popov Paper with Theta_k}
\end{equation}
which also reduces to (\ref{jacobi theta intro popov}) in the limit
$y\rightarrow0^{+}$.

\bigskip{}

This paper is organized as follows. In section \ref{preliminary results popov} we give some auxiliary
results to prove (\ref{Formula to prove analogue Popov}), which mainly concern the behavior of Gauss' hypergeometric function $_{2}F_{1}\left(a,b;c;z\right)$.
In section \ref{proof theorem popov section}, we prove rigorously (\ref{Formula to prove analogue Popov})
by using a variant of our approach to get (\ref{Popov intro}) given 
in \cite{ryce_published}. Lastly, the final section \ref{concluding remaks guinand popov paper} is devoted to a proof of a generalization of the Ramanujan-Guinand formula \cite{Guinand_Ramanujan}, which employs the same method as the proof of our Theorem \ref{analogue theorem popov}.

\bigskip{}

Before moving to the next section, let us remark that (\ref{Formula to prove analogue Popov}) and (\ref{formula for modified bessel}) contain some interesting analogues and particular cases. First, if we take $k=4$ in (\ref{Formula to prove analogue Popov}), we obtain the curious identity
\begin{equation*}
2\pi y+\sum_{n=1}^{\infty}\frac{r_{4}(n)}{n}\,\left\{ e^{-\pi n(x-y)}-e^{-\pi n(x+y)}\right\}=\frac{2\pi y}{x^{2}-y^{2}}+\sum_{n=1}^{\infty}\frac{r_{4}(n)}{n}\,\left\{ e^{-\frac{\pi n}{x+y}}-e^{-\frac{\pi n}{x-y}}\right\},
\end{equation*}
valid for $x>y>0$. 

It is also no surprise that we can extend our formulas
(\ref{Formula to prove analogue Popov}) and (\ref{formula for modified bessel}) to a more general class
of Dirichlet series satisfying Hecke's functional equation in the same spirit as \cite{arithmetical identities, berndt_class_identities, berndt_general_bessel} . We list some interesting examples.
\begin{enumerate}
\item Let $k\geq3$ be an odd positive integer and consider the divisor
function $\sigma_{k}(n)$. Then the Dirichlet series associated to
it, $\zeta(s)\zeta(s-k)$, will satisfy a functional equation similar
to (\ref{functional equation zetak}). Adapting the proof of Theorem \ref{analogue theorem popov} below, one can show the following identity
\begin{align*}
-\frac{B_{k+1}(\pi y)^{\frac{k}{2}}}{2(k+1)\Gamma\left(\frac{k}{2}+1\right)}&+\sum_{n=1}^{\infty}\frac{\sigma_{k}(n)}{n^{\frac{k}{2}}}\,e^{-2\pi nx}J_{\frac{k}{2}}\left(2\pi ny\right)\\
=\frac{(-1)^{\frac{k-1}{2}}B_{k+1}\,(\pi y)^{\frac{k}{2}}}{2(k+1)\Gamma\left(\frac{k}{2}+1\right)\left(x^{2}+y^{2}\right)^{\frac{k+1}{2}}}&+\frac{(-1)^{\frac{k+1}{2}}}{\sqrt{x^{2}+y^{2}}}\sum_{n=1}^{\infty}\frac{\sigma_{k}(n)}{n^{\frac{k}{2}}}e^{-\frac{2\pi nx}{x^{2}+y^{2}}}\,J_{\frac{k}{2}}\left(\frac{2\pi ny}{x^{2}+y^{2}}\right),
\end{align*}
which is valid for $x>y>0$.
\item For $z\in\mathbb{H}:=\left\{ z\in\mathbb{C}\,|\,\text{Im}(z)>0\right\} $, let $f(z)$
be a holomorphic cusp form with weight $k\geq12$ for the full modular
group with Fourier coefficients $a_{f}(n)$. The $L$-function associated
to $f(z)$, $L_{f}(s)$, satisfies an analogue of the functional equation
(\ref{functional equation zetak}). Using the same formalism as in this paper, we may get the formula
\[
\sum_{n=1}^{\infty}\frac{a_{f}(n)}{n^{\frac{k-1}{2}}}\,e^{-2\pi nx}J_{\frac{k-1}{2}}\left(2\pi ny\right)=\frac{(-1)^{k/2}}{\sqrt{x^{2}+y^{2}}}\sum_{n=1}^{\infty}\frac{a_{f}(n)}{n^{\frac{k-1}{2}}}\,e^{-\frac{2\pi nx}{x^{2}+y^{2}}}J_{\frac{k-1}{2}}\left(\frac{2\pi ny}{x^{2}+y^{2}}\right),\,\,\,\,x>y>0.
\]
In particular, the following particular case for Ramanujan's $\tau-$function takes place
\[
\sum_{n=1}^{\infty}\frac{\tau(n)}{n^{\frac{11}{2}}}\,e^{-2\pi nx}J_{\frac{11}{2}}\left(2\pi ny\right)=\frac{1}{\sqrt{x^{2}+y^{2}}}\,\sum_{n=1}^{\infty}\frac{\tau(n)}{n^{\frac{11}{2}}}\,e^{-\frac{2\pi nx}{x^{2}+y^{2}}}J_{\frac{11}{2}}\left(\frac{2\pi ny}{x^{2}+y^{2}}\right),\,\,\,\,x>y>0.
\]
\item Let $\chi$ be a nonprincipal, primitive and even Dirichlet character
modulo $q$. Its $L-$function, $L(2s,\chi)$, satisfies an analogue
of the functional equation (\ref{functional equation zetak}). Thus,
for $x>y>0$, the following formula holds
\[
\sum_{n=1}^{\infty}\chi(n)\sqrt{n}\,e^{-\frac{\pi n^{2}}{q}x}J_{-\frac{1}{4}}\left(\frac{\pi n^{2}y}{q}\right)=\frac{G(\chi)}{\sqrt{q\left(x^{2}+y^{2}\right)}}\,\sum_{n=1}^{\infty}\overline{\chi}(n)\sqrt{n}\,e^{-\frac{\pi n^{2}x}{q(x^{2}+y^{2})}}\,J_{-\frac{1}{4}}\left(\frac{\pi n^{2}y}{q\left(x^{2}+y^{2}\right)}\right),
\]
where $G(\chi)$ denotes the Gauss sum
\[
G(\chi):=\sum_{\ell=1}^{q-1}\chi(\ell)\,e^{\frac{2\pi i\ell}{q}}.
\]
Analogously, if $\chi$ is a nonprincipal, primitive and odd Dirichlet
character modulo $q$, then the associated $L-$function, $L(2s-1,\chi)$,
also satisfies a functional equation similar to (\ref{functional equation zetak}).
This gives the identity
\[
\sum_{n=1}^{\infty}\sqrt{n}\,\chi(n)\,\,e^{-\frac{\pi n^{2}x}{q}}J_{\frac{1}{4}}\left(\frac{\pi n^{2}y}{q}\right)=-\frac{iG(\chi)}{\sqrt{q\left(x^{2}+y^{2}\right)}}\,\sum_{n=1}^{\infty}\sqrt{n}\,\overline{\chi}(n)\,e^{-\frac{\pi n^{2}x}{q(x^{2}+y^{2})}}\,J_{\frac{1}{4}}\left(\frac{\pi n^{2}y}{q\left(x^{2}+y^{2}\right)}\right),
\]
which holds for $x>y>0$.
\end{enumerate}

\section{Preliminary results}\label{preliminary results popov}

For $|z|<1$, the hypergeometric function $_{2}F_{1}(a,b;c;z)$ is
defined by the Gauss series,
\begin{equation}
_{2}F_{1}\left(a,b;c;z\right)=\sum_{\ell=0}^{\infty}\frac{(a)_{\ell}(b)_{\ell}}{(c)_{\ell}\ell!}\,z^{\ell}=\frac{\Gamma(c)}{\Gamma(a)\Gamma(b)}\sum_{\ell=0}^{\infty}\frac{\Gamma(a+\ell)\Gamma(b+\ell)}{\Gamma(c+\ell)\ell!}\,z^{\ell},\label{intro def 2f1}
\end{equation}
and defined elsewhere by analytic continuation. For example, the analytic
continuation can be given via Slater's theorem \cite{Yakubovich_Index_Transforms}.
There is, however, another method of continuing the series (\ref{intro def 2f1}) via Euler's integral
representation: if $\text{Re}(c)>\text{Re}(b)>0$ and $|\arg(1-z)|<\pi$,
then $_{2}F_{1}(a,b;c;z)$ satisfies the formula
\begin{equation}
_{2}F_{1}\left(a,b;c;z\right)=\frac{\Gamma(c)}{\Gamma(b)\Gamma(c-b)}\intop_{0}^{1}t^{b-1}(1-t)^{c-b-1}(1-zt)^{-a}dt,\label{Euler integral}
\end{equation}
where we have chosen the principal
branch $(1-tz)^{-a}=e^{-a\log(1-tz)}$, with $\log(1-tz)$ being real
for $z\in[0,1]$.

There are several applications of Euler's formula. Using it, one is able to
derive the two most famous transformation formulas for hypergeometric
functions,
\begin{equation}
_{2}F_{1}\left(a,b;c;z\right)=(1-z)^{-a}\,_{2}F_{1}\left(a,c-b;c;\frac{z}{z-1}\right),\label{pfaff}
\end{equation}
\begin{equation}
_{2}F_{1}\left(a,b;c;z\right)=(1-z)^{c-a-b}\,_{2}F_{1}\left(c-a,c-b;c;z\right),\label{Euler formula}
\end{equation}
respectively known as Pfaff's and Euler's formulas\footnote{Alternatively, under the terminology of \cite{Yakubovich_Index_Transforms}, they are also
called Boltz formula and self-transformation formula.}.
It is also known that, for any $c\neq0,-1,-2,...$ and fixed $z$, the
function $_{2}F_{1}\left(a,b;c;z\right)$ is an entire function of
$a,\,b$ and $c$ {[}\cite{NIST}, p. 384, 15.2(ii){]}.

Just like (\ref{Euler integral}), there are other interesting integral representation of the hypergeometric
function. The most important formula in this paper is the Mellin transform {[}\cite{handbook_marichev},
p. 155, eq. (3.10.3){]},
\[
\intop_{0}^{\infty}x^{s-1}e^{-\alpha x}J_{\nu}(\beta x)\,dx=\frac{\beta^{\nu}}{2^{\nu}\alpha^{s+\nu}}\,\frac{\Gamma(s+\nu)}{\Gamma(\nu+1)}\,_{2}F_{1}\left(\frac{s+\nu}{2},\frac{s+\nu+1}{2};\nu+1;-\frac{\beta^{2}}{\alpha^{2}}\right),
\]
valid for $\text{Re}(s)>-\text{Re}(\nu)$ and $\text{Re}(\alpha)>|\text{Im}(\beta)|$.
Taking $\nu=\frac{k}{4}-\frac{1}{2}$ in the above representation and assuming that $\alpha>\beta>0$,
we see that the formula
\begin{equation}
\intop_{0}^{\infty}x^{s-1}e^{-\alpha x}J_{\frac{k}{4}-\frac{1}{2}}(\beta x)\,dx=\left(\frac{\beta}{2\alpha}\right)^{\frac{k}{4}-\frac{1}{2}}\alpha^{-s}\,\frac{\Gamma\left(s+\frac{k}{4}-\frac{1}{2}\right)}{\Gamma\left(\frac{k}{4}+\frac{1}{2}\right)}\,_{2}F_{1}\left(\frac{s}{2}+\frac{k}{8}-\frac{1}{4},\,\frac{s}{2}+\frac{k}{8}+\frac{1}{4};\,\frac{k}{4}+\frac{1}{2};-\frac{\beta^{2}}{\alpha^{2}}\right)\label{Intergal formula beginning 2F1}
\end{equation}
must hold for any $\text{Re}(s)>\frac{1}{2}-\frac{k}{4}.$ Since $k\geq1$
by hypothesis, (\ref{Intergal formula beginning 2F1})
must be always valid for $\text{Re}(s)>\frac{1}{4}$. This suggests the following 
inversion formula, which holds for $\sigma>\frac{1}{4}$ and $\alpha>\beta>0$,
\begin{equation}
e^{-\alpha x}J_{\frac{k}{4}-\frac{1}{2}}(\beta x)=\frac{(\beta/2\alpha)^{\frac{k}{4}-\frac{1}{2}}}{2\pi i\,\Gamma\left(\frac{k}{4}+\frac{1}{2}\right)}\intop_{\sigma-i\infty}^{\sigma+i\infty}\,\Gamma\left(s+\frac{k}{4}-\frac{1}{2}\right)\,_{2}F_{1}\left(\frac{4s+k-2}{8},\,\frac{4s+k+2}{8};\,\frac{k}{4}+\frac{1}{2};-\frac{\beta^{2}}{\alpha^{2}}\right)(\alpha x)^{-s}ds.\label{Inversion intro!}
\end{equation}

\bigskip{}

Since it will be instructive later, let us check
that Mellin's inversion theorem is valid for our pair of functions and prove (\ref{Inversion intro!}). Of course, for $\alpha>\beta>0$, the function $f(x)=e^{-\alpha x}J_{\nu}(\beta x)$ is continuous
and, for any $\sigma>-\nu$, $x^{\sigma-1}f(x)\in L_{1}(\mathbb{R}_{+})$. Hence, in order to check the conditions of Mellin's inversion theorem,
one just needs to see that
\begin{equation}
\Gamma\left(\sigma+\frac{k}{4}-\frac{1}{2}+it\right)\,_{2}F_{1}\left(\frac{4\sigma+k-2}{8}+\frac{it}{2},\,\frac{4\sigma+k+2}{8}+\frac{it}{2};\,\frac{k}{4}+\frac{1}{2};-\frac{\beta^{2}}{\alpha^{2}}\right)\in L_{1}\left(\sigma-i\infty,\sigma+i\infty\right).\label{condition mellin transform}
\end{equation}
A way to verify (\ref{condition mellin transform}) is by invoking an
asymptotic expansion due to Watson \cite{watson_asymptotic} (cf. {[}\cite{ERDELIY_TRANSCENDENTAL},
Vol 1, p. 77{]}). However, since Watson's analysis is much stronger
and difficult than what we need in this note, we shall proceed with
a simpler idea, following closely an analogous argument given in [\cite{Yakubovich_Index_Transforms},
p. 22]. 

\begin{lemma}
For $\alpha>\beta>0$ and $\sigma,\nu\in\mathbb{R}$ are such that $\nu>-1$ and $\sigma>-\nu$, then the following
asymptotic expansion takes place
\begin{align}
\,_{2}F_{1}\left(\frac{\sigma+\nu}{2}+\frac{it}{2},\frac{\sigma+\nu+1}{2}+\frac{it}{2};\nu+1;-\frac{\beta^{2}}{\alpha^{2}}\right)&\nonumber \\
=\Gamma(\nu+1)\,\left(\frac{\beta t}{2\alpha}\right)^{-\nu}I_{\nu}\left(\frac{\beta t}{\alpha}\right)\left\{ 1+O\left(\frac{1}{t}\right)\right\}& ,\,\,\,\,\,\,\,\,\,t\rightarrow\infty.\label{asymptotic first part}
\end{align}
\end{lemma}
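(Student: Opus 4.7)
The plan is to start from Euler's integral representation \eqref{Euler integral}. Setting $A=(\sigma+\nu)/2$, $a=A+it/2$, $b=a+1/2$, $c=\nu+1$, and temporarily restricting to $-\nu<\sigma<\nu+1$ so that $0<\operatorname{Re}(b)<c$ (the general case follows either by Pfaff's transformation \eqref{pfaff} or by analytic continuation in $\sigma$, since both sides of \eqref{asymptotic first part} are entire in that parameter), Euler's formula gives
\[
{}_2F_1\!\left(a,b;\nu+1;-\tfrac{\beta^2}{\alpha^2}\right)=\frac{\Gamma(\nu+1)}{\Gamma(b)\Gamma(c-b)}\int_0^1 u^{b-1}(1-u)^{c-b-1}\Bigl(1+\tfrac{\beta^2}{\alpha^2}u\Bigr)^{-a}du.
\]

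Factoring out the rapidly oscillating piece, the integrand carries the exponential $\exp\bigl(\tfrac{it}{2}\varphi(u)\bigr)$ with phase $\varphi(u)=\log u-\log(1-u)-\log\!\bigl(1+\tfrac{\beta^2}{\alpha^2}u\bigr)$. I would then apply the method of steepest descent. The saddles of $\varphi$ are the roots of $1+(\beta^2/\alpha^2)u^2=0$, namely $u^{*}_{\pm}=\pm i\alpha/\beta$; I would deform the contour into the upper half-plane to pass through $u^{*}_{+}$, verifying along the way that the deformation is not obstructed by the branch cuts of the multivalued integrand and that the endpoint contributions at $u=0$ and $u=1$ are dominated by the saddle. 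The leading saddle-point contribution, combined with the Stirling asymptotics of the prefactor $\Gamma(\nu+1)/[\Gamma(b)\Gamma(c-b)]$, is designed to reproduce $\Gamma(\nu+1)(\beta t/(2\alpha))^{-\nu}I_\nu(\beta t/\alpha)$, while the next-order term in the Laplace expansion yields the relative $O(1/t)$ correction.

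A more elementary route, closer to the one in \cite{Yakubovich_Index_Transforms}, manipulates the series directly. The duplication identity $(a)_\ell(a+\tfrac12)_\ell=4^{-\ell}(2a)_{2\ell}$, valid because $b=a+1/2$, rewrites the hypergeometric series as
\[
{}_2F_1\!\left(a,a+\tfrac12;\nu+1;-\tfrac{\beta^2}{\alpha^2}\right)=\sum_{\ell=0}^{\infty}\frac{(2a)_{2\ell}}{(\nu+1)_\ell\,\ell!}\Bigl(\!-\tfrac{\beta^2}{4\alpha^2}\Bigr)^{\ell}.
\]
With $2a=(\sigma+\nu)+it$ one expands $(2a)_{2\ell}=(it)^{2\ell}\{1+O(\ell^2/t)\}$ for $\ell$ small compared with $t$, so that the $\ell$-th term becomes $(\beta t/(2\alpha))^{2\ell}/[(\nu+1)_\ell\,\ell!]\{1+O(\ell^2/t)\}$, which is precisely the $\ell$-th term of the defining series of $\Gamma(\nu+1)(\beta t/(2\alpha))^{-\nu}I_\nu(\beta t/\alpha)={}_{0}F_{1}(;\nu+1;(\beta t/(2\alpha))^{2})$, up to the stated error.

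The main obstacle is the uniform control of the error. The term-by-term comparison is accurate only for $\ell\ll\sqrt{t}$, and deteriorates for $\ell$ of order $t$, which is precisely the range where the Bessel series is concentrated. To overcome this I would split the sum at an intermediate scale $\ell_{0}$ and estimate the two pieces separately, handling the tail via Stirling bounds on the Pochhammer symbols and the known exponential decay of the Bessel series away from its peak at $\ell^{*}\sim\beta t/(2\alpha)$; equivalently, in the integral approach, the corresponding difficulty is the rigorous contour deformation together with the bookkeeping of the subdominant saddle at $u^{*}_{-}$.
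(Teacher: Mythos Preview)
Your second ``elementary route'' is exactly the paper's approach: apply the duplication identity to obtain $(\sigma+\nu+it)_{2\ell}$, use the Pochhammer asymptotic $(-1)^\ell t^{2\ell}\{1+O(1/t)\}$, and identify the emerging series with $\Gamma(\nu+1)(\beta t/2\alpha)^{-\nu}I_\nu(\beta t/\alpha)$. The paper frames the passage to the limit by first bounding the full hypergeometric series term-by-term (replacing $\beta/\alpha$ by some $x_0<1$ and noting convergence), then working with finite partial sums where the $O(1/t)$ remainder is declared uniform in $0\le\ell\le N$ by appeal to \cite{Yakubovich_Index_Transforms}, and finally letting $N\to\infty$.

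Your worry about uniform error control is legitimate and more honest than the paper's treatment: the paper does not supply a self-contained justification for why the $O(1/t)$ constant stays bounded as $N\to\infty$, relying instead on the citation. The splitting strategy you outline (cut at an intermediate scale, handle the tail by Stirling) is the natural way to close this gap, and it is not carried out in the paper either. So on this point you are not missing an idea present in the paper---you are noticing a soft spot that the paper passes over.

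Your first route via Euler's integral and steepest descent is a genuinely different alternative. The paper explicitly mentions this path (Watson's asymptotic analysis \cite{watson_asymptotic}) but dismisses it as ``much stronger and difficult than what we need'' in favour of the series argument. Your sketch is plausible but, as you acknowledge, the contour deformation past the branch points and the suppression of endpoint contributions would need to be written out to constitute a proof.
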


\begin{proof}
First, let us note that we have the uniform estimate for $\beta/\alpha<x_{0}<1$,
\begin{align*}
\left|\,_{2}F_{1}\left(\frac{\sigma+\nu}{2}+\frac{it}{2},\frac{\sigma+\nu+1}{2}+\frac{it}{2};\nu+1;-\frac{\beta^{2}}{\alpha^{2}}\right)\right|&\\
\leq\frac{\sqrt{\pi}\,2^{1-\sigma-\nu}\Gamma(\nu+1)}{\left|\Gamma\left(\frac{\sigma+\nu}{2}+\frac{it}{2}\right)\Gamma\left(\frac{\sigma+\nu+1}{2}+\frac{it}{2}\right)\right|}\sum_{\ell=0}^{\infty}\frac{\Gamma(\sigma+\nu+2\ell)}{\Gamma(\nu+1+\ell)\,\ell!}\,\left(\frac{x_{0}}{2}\right)^{2\ell}&,
\end{align*}
with the latter series being convergent due to the ratio test. Hence,
\begin{equation*}
\,_{2}F_{1}\left(\frac{\sigma+\nu}{2}+\frac{it}{2},\frac{\sigma+\nu+1}{2}+\frac{it}{2};\nu+1;-\frac{\beta^{2}}{\alpha^{2}}\right)
=\lim_{N\rightarrow\infty}\sum_{\ell=0}^{N}\frac{(-1)^{\ell}\,\left(\sigma+\nu+it\right)_{2\ell}}{(\nu+1)_{\ell}\,\ell!}\left(\frac{\beta}{2\alpha}\right)^{2\ell}.
\end{equation*}
We shall consider the asymptotic expansion (with respect to the parameter $t$)
of the previous finite sum over $\ell$. From an exact version of
Stirling's formula, we know that
\begin{equation}
\,\left(\sigma+\nu+it\right)_{2\ell}=(-1)^{\ell}t^{2\ell}\left(1+O\left(\frac{1}{t}\right)\right),\,\,\,\,t\rightarrow\infty,\label{asymptotic pochammer}
\end{equation}
where the remainder is uniform with respect to $0\leq\ell\leq N$
{[}\cite{Yakubovich_Index_Transforms}, p. 20, eq. (1.145){]}. Hence, as $t\rightarrow\infty$,
\[
\sum_{\ell=0}^{N}\frac{(-1)^{\ell}\,\left(\sigma+\nu+it\right)_{2\ell}}{(\nu+1)_{\ell}\,\ell!}\left(\frac{\beta}{2\alpha}\right)^{2\ell}=\sum_{\ell=0}^{N}\frac{1}{(\nu+1)_{\ell}\,\ell!}\left(\frac{\beta t}{2\alpha}\right)^{2\ell}+O\left(\frac{1}{t}\sum_{\ell=0}^{N}\frac{1}{(\nu+1)_{\ell}\,\ell!}\left(\frac{\beta t}{2\alpha}\right)^{2\ell}\right).
\]
Clearly, when $N\rightarrow \infty$, the main term in the above expression can be identified with the modified Bessel function of the first kind, $I_{\nu}(x)$. Thus, as $t\rightarrow \infty$, we are able to get
\begin{align*}
\,_{2}F_{1}\left(\frac{\sigma+\nu}{2}+\frac{it}{2},\frac{\sigma+\nu+1}{2}+\frac{it}{2};\nu+1;-\frac{\beta^{2}}{\alpha^{2}}\right)
=\left\{ 1+O\left(\frac{1}{t}\right)\right\} \,\lim_{N\rightarrow\infty}\sum_{\ell=0}^{N}\frac{1}{(\nu+1)_{\ell}\,\ell!}\left(\frac{\beta t}{2\alpha}\right)^{2\ell}\\
=\Gamma(\nu+1)\,\left(\frac{\beta t}{2\alpha}\right)^{-\nu}I_{\nu}\left(\frac{\beta t}{\alpha}\right)\left\{ 1+O\left(\frac{1}{t}\right)\right\}.
\end{align*}

\end{proof}

Take $\nu:=\frac{k}{4}-\frac{1}{2}$ in the asymptotic expansion (\ref{asymptotic first part}).
Since $\nu>-\frac{1}{2}$, we may use the Poisson integral {[}\cite{NIST},
p. 252, eq. (10.32.2){]},
\[
\left(\frac{z}{2}\right)^{-\nu}I_{\nu}(z)=\frac{1}{\sqrt{\pi}\Gamma(\nu+\frac{1}{2})}\,\intop_{-1}^{1}(1-t^{2})^{\nu-\frac{1}{2}}e^{zt}dt,\,\,\,\,\nu>-\frac{1}{2},\,\,\,z\in\mathbb{C},
\]
to reach the elementary bound
\begin{equation}
\left|\left(\frac{z}{2}\right)^{-\nu}I_{\nu}(z)\right|\leq\frac{e^{|\text{Re}(z)|}}{\Gamma(\nu+1)}.\label{bound final Iv}
\end{equation}
Using Stirling's formula, (\ref{asymptotic first part}) and (\ref{bound final Iv})
with $\nu=\frac{k}{4}-\frac{1}{2}$, one can easily derive the bound
\begin{align}
\left|\Gamma\left(\sigma+\frac{k}{4}-\frac{1}{2}+it\right)\,_{2}F_{1}\left(\frac{4\sigma+k-2}{8}+\frac{it}{2},\,\frac{4\sigma+k+2}{8}+\frac{it}{2};\,\frac{k}{4}+\frac{1}{2};-\frac{\beta^{2}}{\alpha^{2}}\right)\right|\nonumber \\
=(2\pi)^{\frac{1}{2}}\Gamma\left(\frac{k}{4}+\frac{1}{2}\right)\,|t|^{\sigma+\frac{k}{4}-1}e^{-\frac{\pi}{2}|t|}\left|\left(\frac{\beta t}{2\alpha}\right)^{-\frac{k}{4}+\frac{1}{2}}I_{\frac{k}{4}-\frac{1}{2}}\left(\frac{\beta t}{\alpha}\right)\right|\left\{ 1+O\left(\frac{1}{|t|}\right)\right\} \nonumber \\
\leq(2\pi)^{\frac{1}{2}}|t|^{\sigma+\frac{k}{4}-1}\exp\left(-\left(\frac{\pi}{2}-\frac{\beta}{\alpha}\right)|t|\right)\left\{ 1+O\left(\frac{1}{|t|}\right)\right\} ,\,\,\,\,\,|t|\rightarrow\infty,\label{estimate gamma x 2f1}
\end{align}
which shows (\ref{condition mellin transform}), because $\beta/\alpha<1$ by hypothesis. This inequality will be very useful in the sequel.

\section{Proof of Theorem \ref{analogue theorem popov}}\label{proof theorem popov section}

It suffices to prove the formula (\ref{Formula to prove analogue Popov}),
because the proof of (\ref{formula for modified bessel}) is totally
analogous and the only difficulty required for this is just to check
that all the steps below are also valid then $y$ is replaced by $iy$
in (\ref{Formula to prove analogue Popov}). Also, due to the inequality (\ref{bound final Iv}), the condition
$x>y>0$ ensures that both series on (\ref{formula for modified bessel})
are absolutely convergent. 

We start by transforming the infinite series on the left-hand side
of (\ref{Formula to prove analogue Popov}). Using the integral representation
(\ref{Inversion intro!}) with $\alpha=\pi x$, $\beta=\pi y$ and
$x=n$ and choosing $\sigma>\max\left\{ \frac{k}{4}+\frac{1}{2},\frac{3k}{4}-\frac{1}{2}\right\} $
in (\ref{Inversion intro!}), we find that
\begin{align*}
\sum_{n=1}^{\infty}\frac{r_{k}(n)}{n^{\frac{k}{4}-\frac{1}{2}}}\,e^{-\pi nx}J_{\frac{k}{4}-\frac{1}{2}}\left(\pi ny\right)=\frac{y^{\frac{k}{4}-\frac{1}{2}}}{(2x)^{\frac{k}{4}-\frac{1}{2}}\Gamma\left(\frac{k}{4}+\frac{1}{2}\right)}\,\sum_{n=1}^{\infty}\frac{r_{k}(n)}{n^{\frac{k}{4}-\frac{1}{2}}}\\
\times\frac{1}{2\pi i}\,\intop_{\sigma-i\infty}^{\sigma+i\infty}\Gamma\left(s+\frac{k}{4}-\frac{1}{2}\right)\,_{2}F_{1}\left(\frac{4s+k-2}{8},\,\frac{4s+k+2}{8};\,\frac{k}{4}+\frac{1}{2};-\frac{y^{2}}{x^{2}}\right) & \left(\pi xn\right)^{-s}ds.
\end{align*}
By our choice of $\sigma>\frac{k}{4}+\frac{1}{2}$ and by the fact
that the Dirichlet series defining $\zeta_{k}(s)$, (\ref{definition zeta k}),
converges absolutely in the region $\text{Re}(s)>\frac{k}{2}$, the
interchange of the orders of integration and summation is possible
due to absolute convergence. Thus, if $\sigma>\frac{k}{4}+\frac{1}{2}$,
we have the integral representation
\begin{align}
\sum_{n=1}^{\infty}\frac{r_{k}(n)}{n^{\frac{k}{4}-\frac{1}{2}}}\,e^{-\pi nx}J_{\frac{k}{4}-\frac{1}{2}}\left(\pi ny\right) & =\frac{y^{\frac{k}{4}-\frac{1}{2}}}{(2x)^{\frac{k}{4}-\frac{1}{2}}\,\Gamma\left(\frac{k}{4}+\frac{1}{2}\right)}\nonumber \\
\times\,\frac{1}{2\pi i}\,\intop_{\sigma-i\infty}^{\sigma+i\infty}\Gamma\left(s+\frac{k}{4}-\frac{1}{2}\right)\,\zeta_{k}\left(s+\frac{k}{4}-\frac{1}{2}\right) & \,_{2}F_{1}\left(\frac{4s+k-2}{8},\,\frac{4s+k+2}{8};\,\frac{k}{4}+\frac{1}{2};-\frac{y^{2}}{x^{2}}\right)\,(\pi x)^{-s}ds.\label{integral representation first approach}
\end{align}

We shift the line of integration in (\ref{integral representation first approach})
from $\text{Re}(s)=\sigma$ to $\text{Re}(s)=\frac{k}{2}-\sigma$ and apply Cauchy's residue
theorem by performing an integration over the positively oriented rectangular
contour $\left[\sigma\pm iT,\,\frac{k}{2}-\sigma\pm iT\right]$, $T>0$.
By the Phragm\'en-Lindel\"of principle \cite{titchmarsh_theory_of_functions}, we know that,
for any $\delta>0$, $\zeta_{k}(\sigma+it)\ll_{\delta}|t|^{A(\sigma)+\delta}$,
where
\begin{equation}
A(\sigma)=\begin{cases}
0 & \sigma>\frac{k}{2}+\delta\\
\frac{k}{2}-\sigma & -\delta\leq\sigma\leq\frac{k}{2}+\delta\\
\frac{k}{2}-2\sigma & \sigma<-\delta
\end{cases}. \label{convex estimates cases Popov paper zetak}
\end{equation}
Thus, we can easily find the estimate for the contribution of the horizontal segments, 
\begin{align*}
\intop_{\sigma\pm iT}^{\frac{k}{2}-\sigma\pm iT}\left|\Gamma\left(s+\frac{k}{4}-\frac{1}{2}\right)\,\zeta_{k}\left(s+\frac{k}{4}-\frac{1}{2}\right)\,_{2}F_{1}\left(\frac{4s+k-2}{8},\,\frac{4s+k+2}{8};\,\frac{k}{4}+\frac{1}{2};-\frac{y^{2}}{x^{2}}\right)\,(\pi x)^{-s}\right||ds|\\
\ll_{\delta,\sigma,x}\,(2\pi)^{\frac{1}{2}}\,T^{\sigma+A\left(\sigma+\frac{k}{4}-\frac{1}{2}\right)+\frac{k}{4}-1}\exp\left[-\left(\frac{\pi}{2}-\frac{y}{x}\right)T\right],
\end{align*}
which tends to zero as $T\rightarrow\infty$. Therefore, by Cauchy's
theorem, 
\begin{align}
\frac{1}{2\pi i}\,\intop_{\sigma-i\infty}^{\sigma+i\infty}\Gamma\left(s+\frac{k}{4}-\frac{1}{2}\right)\,\zeta_{k}\left(s+\frac{k}{4}-\frac{1}{2}\right)\,_{2}F_{1}\left(\frac{4s+k-2}{8},\,\frac{4s+k+2}{8};\,\frac{k}{4}+\frac{1}{2};-\frac{y^{2}}{x^{2}}\right)\,(\pi x)^{-s}ds\nonumber \\
=\frac{1}{2\pi i}\,\intop_{\frac{k}{2}-\sigma-i\infty}^{\frac{k}{2}-\sigma+i\infty}\Gamma\left(s+\frac{k}{4}-\frac{1}{2}\right)\,\zeta_{k}\left(s+\frac{k}{4}-\frac{1}{2}\right)\,_{2}F_{1}\left(\frac{4s+k-2}{8},\,\frac{4s+k+2}{8};\,\frac{k}{4}+\frac{1}{2};-\frac{y^{2}}{x^{2}}\right)\,(\pi x)^{-s}ds\nonumber \\
+\sum_{\rho}\text{Res}_{s=\rho}\left\{ \Gamma\left(s+\frac{k}{4}-\frac{1}{2}\right)\,\zeta_{k}\left(s+\frac{k}{4}-\frac{1}{2}\right)\,_{2}F_{1}\left(\frac{4s+k-2}{8},\,\frac{4s+k+2}{8};\,\frac{k}{4}+\frac{1}{2};-\frac{y^{2}}{x^{2}}\right)\,(\pi x)^{-s}\right\} ,\label{residue after lalala}
\end{align}
where the last sum denotes the contribution from the residues of the
integrand inside the contour. The hypergeometric function appearing in the previous integral is an entire function of $s$,
so the possible poles come from the factors involving the Gamma function
and the Dirichlet series $\zeta_{k}\left(s+\frac{k}{4}-\frac{1}{2}\right)$.
Since $\zeta_{k}(s)$ has trivial zeros at the points $s\in\mathbb{Z}^{-}$,
the only possible poles of the product $\Gamma\left(s+\frac{k}{4}-\frac{1}{2}\right)\,\zeta_{k}\left(s+\frac{k}{4}-\frac{1}{2}\right)$ inside the rectangular contour 
are located at $\rho=\frac{k}{4}+\frac{1}{2}$ and $\rho=\frac{1}{2}-\frac{k}{4}$. Routine calculations show that\footnote{Recall that the residue of $\zeta_{k}(s)$ at $s=\frac{k}{2}$ is
$\pi^{k/2}/\Gamma(k/2)$ and $\zeta_{k}(0)=-1$.}
\begin{equation}
\text{Res}_{s=\frac{k}{4}+\frac{1}{2}}\left\{ \mathcal{J}_{x,y}(s)\right\} =\pi^{\frac{k}{4}-\frac{1}{2}}x^{-\frac{k}{4}-\frac{1}{2}}\,_{2}F_{1}\left(\frac{k}{4},\,\frac{k}{4}+\frac{1}{2};\,\frac{k}{4}+\frac{1}{2};-\frac{y^{2}}{x^{2}}\right)=\frac{(\pi x)^{\frac{k}{4}-\frac{1}{2}}}{\left(x^{2}+y^{2}\right)^{\frac{k}{4}}},\label{residue 1111}
\end{equation}
\begin{equation}
\text{Res}_{s=\frac{1}{2}-\frac{k}{4}}\left\{ \mathcal{J}_{x,y}(s)\right\} =-(\pi x)^{\frac{k}{4}-\frac{1}{2}}.\label{residue 222222}
\end{equation}
Using (\ref{residue 1111}), (\ref{residue 222222}) and finally rearranging
the terms in (\ref{residue after lalala}), we are able to obtain
the representation
\begin{align}
\frac{(\pi y)^{\frac{k}{4}-\frac{1}{2}}}{2^{\frac{k}{4}-\frac{1}{2}}\,\Gamma\left(\frac{k}{4}+\frac{1}{2}\right)}+\sum_{n=1}^{\infty}\frac{r_{k}(n)}{n^{\frac{k}{4}-\frac{1}{2}}}\,e^{-\pi nx}J_{\frac{k}{4}-\frac{1}{2}}\left(\pi ny\right) & =\frac{(\pi y)^{\frac{k}{4}-\frac{1}{2}}}{2^{\frac{k}{4}-\frac{1}{2}}\,\Gamma\left(\frac{k}{4}+\frac{1}{2}\right)\left(x^{2}+y^{2}\right)^{\frac{k}{4}}}+\frac{y^{\frac{k}{4}-\frac{1}{2}}}{(2x)^{\frac{k}{4}-\frac{1}{2}}\,\Gamma\left(\frac{k}{4}+\frac{1}{2}\right)}\nonumber \\
\times\frac{1}{2\pi i}\,\intop_{\frac{k}{2}-\sigma-i\infty}^{\frac{k}{2}-\sigma+i\infty}\Gamma\left(s+\frac{k}{4}-\frac{1}{2}\right)\zeta_{k}\left(s+\frac{k}{4}-\frac{1}{2}\right) & \,_{2}F_{1}\left(\frac{4s+k-2}{8},\,\frac{4s+k+2}{8};\,\frac{k}{4}+\frac{1}{2};-\frac{y^{2}}{x^{2}}\right)\,(\pi x)^{-s}ds.\label{almost at the end remaining integral}
\end{align}
Note that the left-hand side and the first term on the right-hand
side of (\ref{almost at the end remaining integral}) are already
the first three terms given in (\ref{Formula to prove analogue Popov}).
Thus, we just need to see that the integral in (\ref{almost at the end remaining integral})
reduces to the infinite series on the right of (\ref{Formula to prove analogue Popov}).
This will be the final technical procedure in our proof.

Invoking the functional equation for $\zeta_{k}(s)$, (\ref{functional equation zetak}),
together with Euler's transformation formula (\ref{Euler formula}),
one can check that the integral on the right-hand side of (\ref{almost at the end remaining integral})
is equal to
\begin{align*}
=&\frac{\sqrt{x^{2}+y^{2}}}{\pi x}\,\intop_{\frac{k}{2}-\sigma-i\infty}^{\frac{k}{2}-\sigma+i\infty}\Gamma\left(\frac{k}{4}+\frac{1}{2}-s\right)\zeta_{k}\left(\frac{k}{4}+\frac{1}{2}-s\right)\,_{2}F_{1}\left(\frac{k+6-4s}{8},\,\frac{k+2-4s}{8};\,\frac{k}{4}+\frac{1}{2};-\frac{y^{2}}{x^{2}}\right)\,\left(\frac{\pi x}{x^{2}+y^{2}}\right)^{s}\,ds\\
=&\frac{(\pi x)^{\frac{k}{2}-1}}{\left(x^{2}+y^{2}\right)^{\frac{k-1}{2}}}\,\intop_{\sigma-i\infty}^{\sigma+i\infty}\Gamma\left(s-\frac{k}{4}+\frac{1}{2}\right)\zeta_{k}\left(s-\frac{k}{4}+\frac{1}{2}\right)\,_{2}F_{1}\left(\frac{4s+6-k}{8},\,\frac{4s+2-k}{8};\,\frac{k}{4}+\frac{1}{2};-\frac{y^{2}}{x^{2}}\right)\,\left(\frac{\pi x}{x^{2}+y^{2}}\right)^{-s}\,ds.
\end{align*}

Since we have chosen $\sigma>\max\left\{ \frac{k}{4}+\frac{1}{2},\frac{3k}{4}-\frac{1}{2}\right\} $,
the Dirichlet series $\zeta_{k}\left(s-\frac{k}{4}+\frac{1}{2}\right)$
is absolutely convergent on the line $\text{Re}(s)=\sigma$. Hence, by absolute convergent once more, 
\begin{align}
&\frac{(\pi x)^{\frac{k}{2}-1}}{\left(x^{2}+y^{2}\right)^{\frac{k-1}{2}}}\,\intop_{\sigma-i\infty}^{\sigma+i\infty}\Gamma\left(s-\frac{k}{4}+\frac{1}{2}\right)\zeta_{k}\left(s-\frac{k}{4}+\frac{1}{2}\right)\,_{2}F_{1}\left(\frac{4s+6-k}{8},\,\frac{4s+2-k}{8};\,\frac{k}{4}+\frac{1}{2};-\frac{y^{2}}{x^{2}}\right)\,\left(\frac{\pi x}{x^{2}+y^{2}}\right)^{-s}\,ds\nonumber \\
&=\frac{(\pi x)^{\frac{k}{2}-1}}{\left(x^{2}+y^{2}\right)^{\frac{k-1}{2}}}\,\sum_{n=1}^{\infty}\frac{r_{k}(n)}{n^{\frac{1}{2}-\frac{k}{4}}}\,\intop_{\sigma-i\infty}^{\sigma+i\infty}\Gamma\left(s-\frac{k}{4}+\frac{1}{2}\right)\,_{2}F_{1}\left(\frac{4s+6-k}{8},\,\frac{4s+2-k}{8};\,\frac{k}{4}+\frac{1}{2};-\frac{y^{2}}{x^{2}}\right)\,\left(\frac{\pi xn}{x^{2}+y^{2}}\right)^{-s}\,ds\nonumber \\
&=\frac{1}{\sqrt{x^{2}+y^{2}}}\,\sum_{n=1}^{\infty}\frac{r_{k}(n)}{n^{\frac{k}{4}-\frac{1}{2}}}\,\intop_{\sigma+1-\frac{k}{2}-i\infty}^{\sigma+1-\frac{k}{2}+i\infty}\Gamma\left(z+\frac{k}{4}-\frac{1}{2}\right)\,_{2}F_{1}\left(\frac{4z+k+2}{8},\,\frac{4z+k-2}{8};\,\frac{k}{4}+\frac{1}{2};-\frac{y^{2}}{x^{2}}\right)\,\left(\frac{\pi xn}{x^{2}+y^{2}}\right)^{-z}\,dz.\label{NINETEEEEN!}
\end{align}
By hypothesis, $\sigma+1-\frac{k}{2}>\frac{1}{4}$,
so we can invoke representation (\ref{Inversion intro!})
in the last step with $\alpha=\frac{\pi x}{x^{2}+y^{2}}$, $\beta=\frac{\pi y}{x^{2}+y^{2}}$
and $x=n$, which yields
\begin{align}
\intop_{\sigma+1-\frac{k}{2}-i\infty}^{\sigma+1-\frac{k}{2}+i\infty}\Gamma\left(z+\frac{k}{4}-\frac{1}{2}\right)\,_{2}F_{1}\left(\frac{4z+k+2}{8},\,\frac{4z+k-2}{8};\,\frac{k}{4}+\frac{1}{2};-\frac{y^{2}}{x^{2}}\right)\,\left(\frac{\pi xn}{x^{2}+y^{2}}\right)^{-z}\,dz\nonumber \\
=\frac{2\pi i\,(2x)^{\frac{k}{4}-\frac{1}{2}}\Gamma\left(\frac{k}{4}+\frac{1}{2}\right)}{y^{\frac{k}{4}-\frac{1}{2}}}\,\exp\left(-\frac{\pi nx}{x^{2}+y^{2}}\right)\,J_{\frac{k}{4}-\frac{1}{2}}\left(\frac{\pi ny}{x^{2}+y^{2}}\right).\label{tweeenty}
\end{align}
Returning to (\ref{almost at the end remaining integral}) and (\ref{NINETEEEEN!}),
the use of the integral representation (\ref{tweeenty}) shows at
last that (\ref{almost at the end remaining integral}) results in the transformation formula
\begin{align*}
\frac{(\pi y)^{\frac{k}{4}-\frac{1}{2}}}{2^{\frac{k}{4}-\frac{1}{2}}\Gamma\left(\frac{k}{4}+\frac{1}{2}\right)}&+\sum_{n=1}^{\infty}\frac{r_{k}(n)}{n^{\frac{k}{4}-\frac{1}{2}}}\,e^{-\pi nx}J_{\frac{k}{4}-\frac{1}{2}}\left(\pi ny\right)\\
=\frac{(\pi y)^{\frac{k}{4}-\frac{1}{2}}}{2^{\frac{k}{4}-\frac{1}{2}}\Gamma\left(\frac{k}{4}+\frac{1}{2}\right)\left(x^{2}+y^{2}\right)^{\frac{k}{4}}}&+\frac{1}{\sqrt{x^{2}+y^{2}}}\,\sum_{n=1}^{\infty}\frac{r_{k}(n)}{n^{\frac{k}{4}-\frac{1}{2}}}\,e^{-\frac{\pi nx}{x^{2}+y^{2}}} J_{\frac{k}{4}-\frac{1}{2}}\left(\frac{\pi ny}{x^{2}+y^{2}}\right),
\end{align*}
which is precisely what we have claimed. $\blacksquare$

\bigskip{}

\section{Concluding Remarks: A generalization of the Ramanujan-Guinand formula}\label{concluding remaks guinand popov paper}

On page 253 of his Lost Notebook \cite{Ramanujan_lost, Guinand_Ramanujan, Ramanujan_Notebook_Lost}, Ramanujan
states the following formula, quoted from [\cite{Ramanujan_Notebook_Lost},
p. 94].

\paragraph*{Entry 3.3.1. (p. 253):}
Let $\sigma_{k}(n)=\sum_{d|n}d^{k}$ and let $K_{\nu}(z)$ be the  modified Bessel function of the second kind.
If $\alpha$ and $\beta$ are two positive numbers such that $\alpha\beta=\pi^{2}$
and if $\nu$ is any complex number, then
\begin{align}
\sqrt{\alpha}\sum_{n=1}^{\infty}\sigma_{-\nu}(n)\,n^{\nu/2}K_{\frac{\nu}{2}}(2n\alpha) & -\sqrt{\beta}\sum_{n=1}^{\infty}\sigma_{-\nu}(n)\,n^{\nu/2}K_{\frac{\nu}{2}}(2n\beta)\nonumber \\
=\frac{1}{4}\Gamma\left(-\frac{\nu}{2}\right)\zeta(-\nu)\left\{ \beta^{(1+\nu)/2}-\alpha^{(1+\nu)/2}\right\}  & +\frac{1}{4}\Gamma\left(\frac{\nu}{2}\right)\zeta(\nu)\left\{ \beta^{(1-\nu)/2}-\alpha^{(1-\nu)/2}\right\} .\label{Guinand given at concluding rpopov}
\end{align}

\bigskip{}

This formula was rediscovered by Guinand in 1955 \cite{guinand_rapidly_convergent}, who employed Watson's
formula \cite{watson_reciprocal} in order to derive it. Using an idea similar
to the one employed by Guinand (but working with a general analogue
of Watson's formula), we have proved in \cite{ribeiro_product_bessel} the following
generalization of (\ref{Guinand given at concluding rpopov}).

\begin{theorem}\label{Guinand formula RPOPOV paper}
Let $r_{k}(n)$ denote the number of ways in which $n$ can be written
as the sum of $k$ squares. If $x,y>0$ and if $\nu$ is any complex
number, then the following formula holds
\begin{align}
2\Gamma\left(\frac{k}{2}\right)(\pi y)^{1-\frac{k}{2}}\,\sum_{m,n=1}^{\infty}r_{k}(m)\,r_{k}(n)\,\left(\frac{m}{n}\right)^{\frac{\nu}{2}}\left(mn\right)^{\frac{1}{2}-\frac{k}{4}}\,J_{\frac{k}{2}-1}\left(2\pi\sqrt{m\,n}\,y\right)\,K_{\nu}\left(2\pi\sqrt{m\,n}\,x\right)\nonumber \\
-\frac{2\Gamma\left(\frac{k}{2}\right)(\pi y)^{1-\frac{k}{2}}}{x^{2}+y^{2}}\,\sum_{m,n=1}^{\infty}r_{k}(m)\,r_{k}(n)\,\left(\frac{m}{n}\right)^{\frac{\nu}{2}}\left(mn\right)^{\frac{1}{2}-\frac{k}{4}}\,J_{\frac{k}{2}-1}\left(\frac{2\pi\sqrt{m\,n}\,y}{x^{2}+y^{2}}\right)\,K_{\nu}\left(\frac{2\pi\sqrt{m\,n}\,x}{x^{2}+y^{2}}\right)\nonumber \\
=\,x^{-\nu}\eta_{k}(\nu)\left\{ \frac{1}{\left(x^{2}+y^{2}\right)^{\frac{k}{2}-\nu}}-1\right\} +x^{\nu}\eta_{k}(-\nu)\left\{ \frac{1}{\left(x^{2}+y^{2}\right)^{\frac{k}{2}+\nu}}-1\right\} ,\label{guinand ryce popov paper}
\end{align}
where $\eta_{k}(s)$ denotes the completed Dirichlet series,
\[
\eta_{k}(s)=\pi^{-s}\Gamma(s)\,\zeta_{k}(s).
\]
Moreover, if $\nu$ is any complex number and $x>y>0$, then the analogous
formula is valid
\begin{align}
2\Gamma\left(\frac{k}{2}\right)(\pi y)^{1-\frac{k}{2}}\,\sum_{m,n=1}^{\infty}r_{k}(m)\,r_{k}(n)\,\left(\frac{m}{n}\right)^{\frac{\nu}{2}}\left(mn\right)^{\frac{1}{2}-\frac{k}{4}}\,I_{\frac{k}{2}-1}\left(2\pi\sqrt{m\,n}\,y\right)\,K_{\nu}\left(2\pi\sqrt{m\,n}\,x\right)\nonumber \\
-\frac{2\Gamma\left(\frac{k}{2}\right)(\pi y)^{1-\frac{k}{2}}}{x^{2}-y^{2}}\,\sum_{m,n=1}^{\infty}r_{k}(m)\,r_{k}(n)\,\left(\frac{m}{n}\right)^{\frac{\nu}{2}}\left(mn\right)^{\frac{1}{2}-\frac{k}{4}}\,I_{\frac{k}{2}-1}\left(\frac{2\pi\sqrt{m\,n}\,y}{x^{2}-y^{2}}\right)\,K_{\nu}\left(\frac{2\pi\sqrt{m\,n}\,x}{x^{2}-y^{2}}\right)\nonumber \\
=\,x^{-\nu}\eta_{k}(\nu)\left\{ \frac{1}{\left(x^{2}-y^{2}\right)^{\frac{k}{2}-\nu}}-1\right\} +x^{\nu}\eta_{k}(-\nu)\left\{ \frac{1}{\left(x^{2}-y^{2}\right)^{\frac{k}{2}+\nu}}-1\right\} .\label{guinand analogue I modified}
\end{align}
\end{theorem}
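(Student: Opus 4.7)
The plan is to imitate the Mellin-Barnes strategy that proved Theorem \ref{analogue theorem popov}, adapted to the setting of a \emph{double} summation. The first step is to obtain a Mellin-Barnes representation of the kernel
$$
F_{x,y}(\xi)\;=\;\xi^{\frac{1}{2}-\frac{k}{4}}\,J_{\frac{k}{2}-1}\!\bigl(2\pi\sqrt{\xi}\,y\bigr)\,K_{\nu}\!\bigl(2\pi\sqrt{\xi}\,x\bigr),\qquad \xi>0,
$$
by Mellin-inverting the classical product-of-Bessel integral $\int_{0}^{\infty}\xi^{s-1}J_{\mu}(a\xi)K_{\nu}(b\xi)\,d\xi$, which is evaluable in closed form through Gauss' ${}_{2}F_{1}$. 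Substituting this representation with $\xi=mn$ into the first double sum on the left of (\ref{guinand ryce popov paper}) and observing that the weight $(m/n)^{\nu/2}$ \emph{decouples} the $m$- and $n$-dependence, the double sum becomes (after exchanging sum and integral, legitimate for $\text{Re}(s)=\sigma$ chosen so that both resulting Dirichlet series lie in their absolute convergence regions)
\begin{align*}
\sum_{m,n\geq 1}r_{k}(m)r_{k}(n)(m/n)^{\nu/2}F_{x,y}(mn)
\;=\;\frac{1}{2\pi i}\int_{(\sigma)}\mathcal{K}_{x,y}(s)\,\zeta_{k}\!\bigl(s-\tfrac{\nu}{2}\bigr)\,\zeta_{k}\!\bigl(s+\tfrac{\nu}{2}\bigr)\,ds,
\end{align*}
where $\mathcal{K}_{x,y}(s)$ collects the Gamma factors and the ${}_{2}F_{1}$ kernel coming from the inversion.

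Next I would shift the contour to the line $\text{Re}(s)=\tfrac{k}{2}-\sigma$, which is the line of symmetry of the functional equation (\ref{functional equation zetak}) applied to each factor. Along the way the integrand crosses four simple poles: the two poles of the product $\zeta_{k}(s-\tfrac{\nu}{2})\zeta_{k}(s+\tfrac{\nu}{2})$ at $s=\tfrac{k}{2}\pm\tfrac{\nu}{2}$, together with two poles of $\mathcal{K}_{x,y}(s)$ at the reflected points $s=\mp\tfrac{\nu}{2}$ which survive the would-be cancellation by the trivial zeros of $\zeta_{k}$. A direct computation using $\text{Res}_{s=k/2}\zeta_{k}(s)=\pi^{k/2}/\Gamma(k/2)$ and the definition $\eta_{k}(s)=\pi^{-s}\Gamma(s)\zeta_{k}(s)$ should package the four residues into exactly the four terms $x^{\pm\nu}\eta_{k}(\pm\nu)\{(x^{2}+y^{2})^{-(k/2\mp\nu)}-1\}$ on the right of (\ref{guinand ryce popov paper}). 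Finally, applying (\ref{functional equation zetak}) to each of the two $\zeta_{k}$ factors in the shifted integral sends $(x,y)\mapsto(x/(x^{2}+y^{2}),y/(x^{2}+y^{2}))$, and reinstating the inverse Mellin-Barnes representation of $F$ at the dual variables recovers the second double series in (\ref{guinand ryce popov paper}). The companion identity (\ref{guinand analogue I modified}) then follows from the substitution $y\mapsto iy$ via $J_{\nu}(iz)=i^{\nu}I_{\nu}(z)$; the estimate (\ref{bound final Iv}) ensures absolute convergence of all series under $x>y>0$.

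The principal obstacle is the analytic control of the shifted integral. Because the integrand now carries a \emph{product} of two Dirichlet series, on vertical lines only the convexity bounds (\ref{convex estimates cases Popov paper zetak}) are available for each factor; these must be combined with the Gamma-function decay and a hypergeometric estimate in the spirit of (\ref{estimate gamma x 2f1}) to confirm that the horizontal segments of the rectangular contour vanish as $T\to\infty$ and that Cauchy's theorem applies. A secondary subtlety is that for exceptional $\nu$ (most notably $\nu=0$, where the two inner poles collapse to $s=\tfrac{k}{2}$, and integer $\nu$ where poles of $\zeta_{k}$ and $\mathcal{K}_{x,y}$ may collide) two of the four simple poles coalesce into a double pole; this is handled by a limiting argument in $\nu$, since both sides of (\ref{guinand ryce popov paper}) are meromorphic in $\nu$ with only removable singularities at such points.
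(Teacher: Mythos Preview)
Your plan coincides with the paper's own proof: represent $J_{k/2-1}K_{\nu}$ through its Mellin--Barnes integral in terms of a Gauss ${}_{2}F_{1}$, sum over $m,n$ to produce the product of two shifted $\zeta_{k}$'s, shift the contour to the symmetric line picking up four simple poles whose residues assemble into the $\eta_{k}(\pm\nu)$ terms, and use the functional equation to turn the shifted integral back into the dual double series; the $I_{k/2-1}$ version follows by $y\mapsto iy$, and exceptional $\nu$ (the paper singles out $\nu=k/2$ and reduces to $\text{Re}(\nu)>0$ by $\nu\leftrightarrow-\nu$) are handled by analytic continuation. The only item you leave implicit is that recovering the map $(x,y)\mapsto\bigl(x/(x^{2}+y^{2}),\,y/(x^{2}+y^{2})\bigr)$ in the shifted integral requires, in addition to the functional equation for each $\zeta_{k}$, an application of Euler's transformation (\ref{Euler formula}) to the ${}_{2}F_{1}$ kernel; this supplies the factor $(1+y^{2}/x^{2})^{s-k/2}$ that converts $x^{-s}$ into $\bigl(x/(x^{2}+y^{2})\bigr)^{-s}$ and is the step that actually produces the dual variables.
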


\bigskip{}

At first glance, (\ref{guinand ryce popov paper}) and (\ref{guinand analogue I modified})
do not seem to be related to (\ref{Guinand given at concluding rpopov}),
despite the fact that both formulas share the Modified Bessel function,
$K_{\nu}(z)$, as an element in their infinite series. In the next
few lines, we shall argue why (\ref{guinand ryce popov paper}) and
(\ref{guinand analogue I modified}) are, in fact, generalizations
of (\ref{Guinand given at concluding rpopov}). Recalling that $\zeta_{1}(s)=2\zeta(2s)$
(see (\ref{zeta 1(s) definition}) above) and that $r_{1}(n)=2$ iff
$n$ is a perfect square and $0$ otherwise, we see that a particular
case of (\ref{guinand ryce popov paper}) is
\begin{align}
&8\pi\sqrt{y}\,\sum_{m,n=1}^{\infty}\left(\frac{m}{n}\right)^{\nu}\sqrt{m\,n}\,J_{-\frac{1}{2}}\left(2\pi m\,n\,y\right)\,K_{\nu}\left(2\pi m\,n\,x\right)\nonumber \\
-&\frac{8\pi\sqrt{y}}{x^{2}+y^{2}}\,\sum_{m,n=1}^{\infty}\left(\frac{m}{n}\right)^{\nu}\sqrt{m\,n}\,J_{-\frac{1}{2}}\left(\frac{2\pi m\,n\,y}{x^{2}+y^{2}}\right)\,K_{\nu}\left(\frac{2\pi m\,n\,x}{x^{2}+y^{2}}\right)\nonumber \\
&=2(\pi x)^{-\nu}\,\Gamma(\nu)\zeta\left(2\nu\right)\left\{ \frac{1}{\left(x^{2}+y^{2}\right)^{\frac{1}{2}-\nu}}-1\right\} +2(\pi x)^{\nu}\,\Gamma(-\nu)\zeta\left(-2\nu\right)\left\{ \frac{1}{\left(x^{2}+y^{2}\right)^{\frac{1}{2}+\nu}}-1\right\} .\label{immediate case k=00003D1}
\end{align}
However, this expression can be simplified further. Indeed, recalling
the reduction formula for the Bessel function [\cite{temme_book}, p. 248]
\[
J_{-\frac{1}{2}}(x)=\sqrt{\frac{2}{\pi x}}\,\cos(x),
\]
and making the substitution of $\nu$ by $\nu/2$ in (\ref{immediate case k=00003D1}), we
can now rewrite (\ref{immediate case k=00003D1}) in the following manner
\begin{align}
\sum_{m,n=1}^{\infty}\left(\frac{m}{n}\right)^{\nu/2}\cos\left(2\pi m\,n\,y\right)\,K_{\nu/2}\left(2\pi m\,n\,x\right)&-\frac{1}{\sqrt{x^{2}+y^{2}}}\,\sum_{m,n=1}^{\infty}\left(\frac{m}{n}\right)^{\nu/2}\,\cos\left(\frac{2\pi m\,n\,y}{x^{2}+y^{2}}\right)\,K_{\nu/2}\left(\frac{2\pi m\,n\,x}{x^{2}+y^{2}}\right)\nonumber \\
=\frac{1}{4}(\pi x)^{-\frac{\nu}{2}}\,\Gamma\left(\frac{\nu}{2}\right)\,\zeta\left(\nu\right)\left\{ \frac{1}{\left(x^{2}+y^{2}\right)^{\frac{1-\nu}{2}}}-1\right\} &+\frac{1}{4}(\pi x)^{\frac{\nu}{2}}\,\Gamma\left(-\frac{\nu}{2}\right)\zeta\left(-\nu\right)\left\{ \frac{1}{\left(x^{2}+y^{2}\right)^{\frac{1+\nu}{2}}}-1\right\} .\label{intermediate guuuinand}
\end{align}
The main formal difference between (\ref{Guinand given at concluding rpopov})
and the newly derived formula (\ref{intermediate guuuinand}) is that
(\ref{Guinand given at concluding rpopov}) is expressed via two single
infinite series, whilst the infinite series in (\ref{intermediate guuuinand})
contain two variables of summation. A way to circumvent this is
by recalling the definition of the divisor function
\[
\sigma_{z}(n)=\sum_{d|n}d^{z},
\]
which helps us to see that (\ref{intermediate guuuinand}) can be rewritten as 
\begin{align}
\,\sum_{n=1}^{\infty}\sigma_{-\nu}(n)\,n^{\nu/2}\cos\left(2\pi ny\right)\,K_{\nu/2}\left(2\pi n\,x\right)-\frac{1}{\sqrt{x^{2}+y^{2}}}\,\sum_{n=1}^{\infty}\,\sigma_{-\nu}(n)\,n^{\nu/2}\,\cos\left(\frac{2\pi n\,y}{x^{2}+y^{2}}\right)\,K_{\nu/2}\left(\frac{2\pi n\,x}{x^{2}+y^{2}}\right)\nonumber \\
=\frac{1}{4}(\pi x)^{-\frac{\nu}{2}}\,\Gamma\left(\frac{\nu}{2}\right)\zeta\left(\nu\right)\left\{ \frac{1}{\left(x^{2}+y^{2}\right)^{\frac{1-\nu}{2}}}-1\right\} +\frac{1}{4}(\pi x)^{\frac{\nu}{2}}\Gamma\left(-\frac{\nu}{2}\right)\,\zeta(-\nu)\left\{ \frac{1}{\left(x^{2}+y^{2}\right)^{\frac{1+\nu}{2}}}-1\right\} .\label{transformation formula Guinand popov final}
\end{align}
Let us note now that (\ref{transformation formula Guinand popov final}) is very similar to (\ref{Guinand given at concluding rpopov}), being even more general than it. If we replace $x$ by $\alpha/\pi$
and let $y=0$ in (\ref{transformation formula Guinand popov final}),
we can recover the Ramanujan-Guinand formula in the same form as (\ref{Guinand given at concluding rpopov}).

\bigskip{}

Having argued that (\ref{guinand ryce popov paper}) (respectively
(\ref{guinand analogue I modified})) constitutes a generalization
of the beautiful result of Ramanujan, why do we refer to it in this
note about Popov's formula? First, as we have shown in \cite{ribeiro_product_bessel},
(\ref{guinand ryce popov paper}) is an indirect consequence of (\ref{Popov intro}).
Second, the transformation formulas (\ref{guinand ryce popov paper})
and (\ref{guinand analogue I modified}) are clearly remindful of
(\ref{Formula to prove analogue Popov}) and (\ref{formula for modified bessel}),
because the roles of $x$ and $y$ in these formulas are transformed
in similar ways. In fact, for $\nu \in \mathbb{C}$, $k\in \mathbb{N}$ and $x,y>0$, if we set the function
\begin{align*}
\Psi_{k}\left(\nu;\,x,y\right)&:=x^{-\nu}\eta_{k}(\nu)+x^{\nu}\eta_{k}(-\nu)\\
+2\Gamma\left(\frac{k}{2}\right)(\pi y)^{1-\frac{k}{2}}\,&\sum_{m,n=1}^{\infty}r_{k}(m)\,r_{k}(n)\,\left(\frac{m}{n}\right)^{\frac{\nu}{2}}\left(m\,n\right)^{\frac{1}{2}-\frac{k}{4}}\,J_{\frac{k}{2}-1}\left(2\pi\sqrt{m\,n}\,y\right)\,K_{\nu}\left(2\pi\sqrt{m\,n}\,x\right),
\end{align*}
then (\ref{guinand ryce popov paper}) is equivalent to the transformation
\[
\Psi_{k}\left(\nu;\,x,y\right)=\frac{1}{\left(x^{2}+y^{2}\right)^{\frac{k}{2}}}\,\Psi_{k}\left(\nu;\,\frac{x}{x^{2}+y^{2}},\frac{y}{x^{2}+y^{2}}\right),
\]
which is very similar to (\ref{general theta Popov Paper with Theta_k}), provided at the introduction of this paper. Besides, the
proof of (\ref{guinand ryce popov paper}) (resp. (\ref{guinand analogue I modified})) here presented uses exactly the same argument as the proof of (\ref{Formula to prove analogue Popov}) (resp. (\ref{formula for modified bessel})).
Thus, in order to conclude our paper, we present a short proof of
the formulas (\ref{guinand ryce popov paper}) and (\ref{guinand analogue I modified}).

Just like before, we
start with a Mellin representation {[}\cite{handbook_marichev}, p. 224, eq. (3.14.12.1){]}
\begin{equation}
\intop_{0}^{\infty}x^{s-1}J_{\mu}\left(\beta x\right)\,K_{\nu}\left(\alpha x\right)dx=\frac{2^{s-2}\beta^{\mu}}{\alpha^{s+\mu}\Gamma(\mu+1)}\Gamma\left(\frac{s+\mu-\nu}{2}\right)\Gamma\left(\frac{s+\mu+\nu}{2}\right)\,_{2}F_{1}\left(\frac{s+\mu-\nu}{2},\frac{s+\mu+\nu}{2};\mu+1;-\frac{\beta^{2}}{\alpha^{2}}\right),\label{Direct mellin corollary ryce}
\end{equation}
which is valid for $\text{Re}(s+\mu)>|\text{Re}(\nu)|$ and $\text{Re}(\alpha)>|\text{Im}(\beta)|$.
Taking $\mu=\frac{k}{2}-1>-1$ and assuming that $\alpha>\beta>0$,
we find the Mellin inverse of (\ref{Direct mellin corollary ryce}),
\begin{align}
J_{\frac{k}{2}-1}\left(\beta x\right)\,K_{\nu}\left(\alpha x\right) & =\frac{1}{8\pi i\Gamma\left(\frac{k}{2}\right)}\left(\frac{\beta}{\alpha}\right)^{\frac{k}{2}-1}\,\intop_{\sigma-i\infty}^{\sigma+i\infty}\Gamma\left(\frac{s+k/2-1-\nu}{2}\right)\Gamma\left(\frac{s+k/2-1+\nu}{2}\right)\nonumber \\
 & \,\,\negmedspace\,\,\;\,\,\qquad\,\times\,_{2}F_{1}\left(\frac{s+k/2-1-\nu}{2},\frac{s+k/2-1+\nu}{2};\frac{k}{2};-\frac{\beta^{2}}{\alpha^{2}}\right)\left(\frac{\alpha x}{2}\right)^{-s}ds,\label{mellin inverse 2f1 ryce}
\end{align}
where $\sigma:=\text{Re}(s)>|\text{Re}(\nu)|+1-\frac{k}{2}$.

The conditions for the application of the Mellin inversion formula
(\ref{mellin inverse 2f1 ryce}) are met once we prove
an estimate analogous to (\ref{asymptotic first part}). This is done
in the next lemma.

\begin{lemma}\label{lemma concluding remark popov}
Let $\mu>-1$ and $\sigma+\mu>|\text{Re}(\nu)|$. Also, suppose that
$x>y>0$. Then there exists some sufficiently large $\tau_{0}$ such
that, for any $t\geq\tau_{0}$, the following inequality holds
\begin{align}
\left|\frac{\Gamma\left(\frac{\sigma+\mu-\nu}{2}+\frac{it}{2}\right)\Gamma\left(\frac{\sigma+\mu+\nu}{2}+\frac{it}{2}\right)}{\Gamma(\mu+1)}\,{}_{2}F_{1}\left(\frac{\sigma+\mu-\nu}{2}+\frac{it}{2},\frac{\sigma+\mu+\nu}{2}+\frac{it}{2};\,\mu+1;\,-\frac{\beta^{2}}{\alpha^{2}}\right)\right|\nonumber \\
\leq4\pi\,\left(\frac{t}{2}\right)^{\sigma+\mu-1}\,\exp\left\{ -\left(\frac{\pi}{2}-\frac{\beta}{\alpha}\right)t\right\} .\label{Inequality to prove corollary 2.11}
\end{align}
\end{lemma}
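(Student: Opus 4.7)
The proof plan follows the same template as the earlier proof of the $K_\nu$-free analogue (i.e.\ the estimate leading to (\ref{estimate gamma x 2f1})). The main point is that the two Pochhammer symbols appearing in ${}_2F_1$ collapse to a modified Bessel function $I_\mu(\beta t/\alpha)$ in the limit $t\to\infty$, exactly as before, while the two Gamma factors contribute a Stirling-type decay in which the $\nu$-dependence on the real part cancels.

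First, I would expand the hypergeometric as the Gauss series and use the uniform majorant argument already invoked in the proof of Lemma 2.1: for any truncation $N$, the finite sum
$$\sum_{\ell=0}^{N}\frac{\left(\tfrac{\sigma+\mu-\nu}{2}+\tfrac{it}{2}\right)_{\ell}\left(\tfrac{\sigma+\mu+\nu}{2}+\tfrac{it}{2}\right)_{\ell}}{(\mu+1)_{\ell}\,\ell!}\left(-\frac{\beta^{2}}{\alpha^{2}}\right)^{\ell}$$
is dominated by a convergent $\ell$-series independent of $t$, which justifies the interchange of limit and summation as $N\to\infty$. Applying the exact Stirling expansion (\ref{asymptotic pochammer}) separately to each Pochhammer, and noting that the product gives $\left(\tfrac{it}{2}\right)^{2\ell}\{1+O(1/t)\}=(-1)^{\ell}(t/2)^{2\ell}\{1+O(1/t)\}$ uniformly in $0\le\ell\le N$, the truncated sum becomes
$$\sum_{\ell=0}^{N}\frac{(\beta t/(2\alpha))^{2\ell}}{(\mu+1)_{\ell}\,\ell!}\Bigl\{1+O\bigl(\tfrac{1}{t}\bigr)\Bigr\}.$$
Letting $N\to\infty$ identifies the leading term as $\Gamma(\mu+1)(\beta t/(2\alpha))^{-\mu}I_{\mu}(\beta t/\alpha)$, exactly as in (\ref{asymptotic first part}).

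Next, I would apply the Poisson-integral bound (\ref{bound final Iv}) to that modified Bessel factor, which yields
$$\left|{}_{2}F_{1}\!\left(\tfrac{\sigma+\mu-\nu}{2}+\tfrac{it}{2},\tfrac{\sigma+\mu+\nu}{2}+\tfrac{it}{2};\,\mu+1;\,-\tfrac{\beta^{2}}{\alpha^{2}}\right)\right|\le e^{\beta t/\alpha}\Bigl\{1+O\bigl(\tfrac{1}{t}\bigr)\Bigr\},$$
with the $\Gamma(\mu+1)$ factors cancelling between the leading asymptotic and (\ref{bound final Iv}). For the two Gamma factors I would use standard Stirling $|\Gamma(a+it/2)|\sim\sqrt{2\pi}\,(t/2)^{\mathrm{Re}(a)-1/2}e^{-\pi t/4}$ with $a=(\sigma+\mu\mp\nu)/2$. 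The crucial algebraic observation is that the two $\mathrm{Re}(\nu)$ contributions cancel: $\mathrm{Re}(a_{1})+\mathrm{Re}(a_{2})-1=\sigma+\mu-1$. Hence
$$\left|\Gamma\!\left(\tfrac{\sigma+\mu-\nu}{2}+\tfrac{it}{2}\right)\Gamma\!\left(\tfrac{\sigma+\mu+\nu}{2}+\tfrac{it}{2}\right)\right|=2\pi\,(t/2)^{\sigma+\mu-1}e^{-\pi t/2}\Bigl\{1+O\bigl(\tfrac{1}{t}\bigr)\Bigr\}.$$

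Multiplying these two estimates and dividing by $\Gamma(\mu+1)$ gives
$$\text{LHS of (\ref{Inequality to prove corollary 2.11})}\le\frac{2\pi}{\Gamma(\mu+1)}\,\left(\tfrac{t}{2}\right)^{\sigma+\mu-1}\exp\!\left\{-\left(\tfrac{\pi}{2}-\tfrac{\beta}{\alpha}\right)t\right\}\Bigl\{1+O\bigl(\tfrac{1}{t}\bigr)\Bigr\}.$$
Since $\Gamma(\mu+1)$ is bounded below by $\min_{x>0}\Gamma(x)\approx 0.886>\tfrac{1}{2}$ for all $\mu>-1$, the constant $2\pi/\Gamma(\mu+1)$ is bounded by $4\pi$ once the $\{1+O(1/t)\}$ term is absorbed, and picking $\tau_{0}$ large enough yields the claimed inequality (\ref{Inequality to prove corollary 2.11}). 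The only delicate step is the uniformity in $\ell$ of the Pochhammer asymptotic used for the truncated sum, but this is precisely the content of (\ref{asymptotic pochammer}) and has already been carried out in the single-Pochhammer case in the proof of the earlier lemma, so no new analytic input is required.
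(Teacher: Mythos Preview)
Your proposal is correct and follows essentially the same route as the paper's proof: expand ${}_2F_1$ as a Gauss series, use the uniform Stirling estimate on the Pochhammer symbols to recognise the modified Bessel function $I_\mu(\beta t/\alpha)$, apply the Poisson-integral bound (\ref{bound final Iv}), and combine with Stirling for the Gamma prefactors. The only cosmetic difference is organizational: you first isolate the ${}_2F_1$ asymptotic and then multiply by the Gamma-product estimate, whereas the paper carries the Gamma factors through the truncated sum and applies Stirling to them in an intermediate step; you also make explicit the final bound $2\pi/\Gamma(\mu+1)<4\pi$ via $\min_{x>0}\Gamma(x)>1/2$, which the paper leaves implicit in its closing sentence.
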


\begin{proof}
For $0<\beta/\alpha\leq X_{0}<1$, the use of the hypothesis $\sigma+\mu>|\text{Re}(\nu)|$
shows the uniform bound
\begin{align*}
\left|\frac{\Gamma\left(\frac{\sigma+\mu-\nu}{2}+\frac{it}{2}\right)\Gamma\left(\frac{\sigma+\mu+\nu}{2}+\frac{it}{2}\right)}{\Gamma(\mu+1)}\,{}_{2}F_{1}\left(\frac{\sigma+\mu-\nu}{2}+\frac{it}{2},\frac{\sigma+\mu+\nu}{2}+\frac{it}{2};\,\mu+1;\,-\frac{\beta^{2}}{\alpha^{2}}\right)\right|\\
\leq\sum_{\ell=0}^{\infty}\frac{\Gamma\left(\frac{\sigma+\mu-\text{Re}(\nu)}{2}+\ell\right)\Gamma\left(\frac{\sigma+\mu+\text{Re}(\nu)}{2}+\ell\right)}{\Gamma(\mu+1+\ell)\,\ell!}X_{0}^{2\ell},
\end{align*}
where the latter series is convergent due to the ratio test. Thus,
we have that
\begin{align*}
\frac{\Gamma\left(\frac{\sigma+\mu-\nu}{2}+\frac{it}{2}\right)\Gamma\left(\frac{\sigma+\mu+\nu}{2}+\frac{it}{2}\right)}{\Gamma(\mu+1)}\,{}_{2}F_{1}\left(\frac{\sigma+\mu-\nu}{2}+\frac{it}{2},\frac{\sigma+\mu+\nu}{2}+\frac{it}{2};\,\mu+1;\,-\frac{\beta^{2}}{\alpha^{2}}\right)\\
=\frac{\Gamma\left(\frac{\sigma+\mu-\nu}{2}+\frac{it}{2}\right)\Gamma\left(\frac{\sigma+\mu+\nu}{2}+\frac{it}{2}\right)}{\Gamma(\mu+1)}\,\lim_{N\rightarrow\infty}\sum_{\ell=0}^{N}\frac{(-1)^{\ell}\left(\frac{\sigma+\mu-\nu}{2}+\frac{it}{2}\right)_{\ell}\left(\frac{\sigma+\mu+\nu}{2}+\frac{it}{2}\right)_{\ell}}{(\mu+1)_{\ell}\ell!}\left(\frac{\beta}{\alpha}\right)^{2\ell}.
\end{align*}
Using the fact that the remainder in Stirling's formula can be uniformly
estimated with respect to the index $\ell$ (cf. [\cite{Yakubovich_Index_Transforms}, p. 20, eq. (1.145)]), we see that
\begin{align}
\frac{\Gamma\left(\frac{\sigma+\mu-\nu}{2}+\frac{it}{2}\right)\Gamma\left(\frac{\sigma+\mu+\nu}{2}+\frac{it}{2}\right)}{\Gamma(\mu+1)}\,\sum_{\ell=0}^{N}\frac{(-1)^{\ell}\left(\frac{\sigma+\mu-\nu}{2}+\frac{it}{2}\right)_{\ell}\left(\frac{\sigma+\mu+\nu}{2}+\frac{it}{2}\right)_{\ell}}{(\mu+1)_{\ell}\ell!}\left(\frac{\beta}{\alpha}\right)^{2\ell}\nonumber \\
=\frac{\Gamma\left(\frac{\sigma+\mu-\nu}{2}+\frac{it}{2}\right)\Gamma\left(\frac{\sigma+\mu+\nu}{2}+\frac{it}{2}\right)}{\Gamma(\mu+1)}\,\sum_{\ell=0}^{N}\frac{1}{(\mu+1)_{\ell}\ell!}\left(\frac{\beta t}{2\alpha}\right)^{2\ell}\left\{ 1+O\left(\frac{1}{t}\right)\right\} \nonumber \\
=\frac{2\pi}{\Gamma(\mu+1)}\left(\frac{t}{2}\right)^{\sigma+\mu-1}e^{-\frac{\pi}{2}t}\,\sum_{\ell=0}^{N}\frac{1}{(\mu+1)_{\ell}\ell!}\left(\frac{\beta t}{2\alpha}\right)^{2\ell}\left\{ 1+O\left(\frac{1}{t}\right)\right\} ,\label{After application of Stirling in concluding lemma}
\end{align}
where the last step is just an application of Stirling's formula for
the product $\Gamma\left(\frac{\sigma+\mu-\nu}{2}+\frac{it}{2}\right)\Gamma\left(\frac{\sigma+\mu+\nu}{2}+\frac{it}{2}\right)$.
Recalling the power series for the modified Bessel function $I_{\nu}(z)$,
(\ref{After application of Stirling in concluding lemma}) shows that
\begin{align*}
\frac{\Gamma\left(\frac{\sigma+\mu-\nu}{2}+\frac{it}{2}\right)\Gamma\left(\frac{\sigma+\mu+\nu}{2}+\frac{it}{2}\right)}{\Gamma(\mu+1)}\,{}_{2}F_{1}\left(\frac{\sigma+\mu-\nu}{2}+\frac{it}{2},\frac{\sigma+\mu+\nu}{2}+\frac{it}{2};\,\mu+1;\,-\frac{y^{2}}{x^{2}}\right)\\
=2\pi\left(\frac{t}{2}\right)^{\sigma+\mu-1}\left(\frac{\beta t}{2\alpha}\right)^{-\mu}I_{\mu}\left(\frac{\beta t}{\alpha}\right)\,e^{-\frac{\pi}{2}t}\left\{ 1+O\left(\frac{1}{t}\right)\right\} ,\,\,\,\,t\rightarrow\infty.
\end{align*}
Finally, appealing to (\ref{bound final Iv}) we find the desired inequality
(\ref{Inequality to prove corollary 2.11}), which proves Lemma \ref{lemma concluding remark popov}.
\end{proof}

By the previous lemma, one concludes that
\[
\frac{\Gamma\left(\frac{\sigma+\mu-\nu}{2}+\frac{it}{2}\right)\Gamma\left(\frac{\sigma+\mu+\nu}{2}+\frac{it}{2}\right)}{\Gamma(\mu+1)}\,{}_{2}F_{1}\left(\frac{\sigma+\mu-\nu}{2}+\frac{it}{2},\frac{\sigma+\mu+\nu}{2}+\frac{it}{2};\,\mu+1;\,-\frac{y^{2}}{x^{2}}\right)\in L_{1}\left(\sigma-i\infty,\sigma+i\infty\right),
\]
whenever $\sigma+\mu>|\text{Re}(\nu)|,$ $\mu>-1$ and $\beta>\alpha>0$.
Therefore, the integral representation (\ref{mellin inverse 2f1 ryce})
must hold under analogous conditions. We are now ready to give a new proof of Theorem \ref{Guinand formula RPOPOV paper} using the main ideas of this paper.
\begin{proof}[Proof of Theorem \ref{Guinand formula RPOPOV paper}]
As in the proof of Theorem \ref{analogue theorem popov}, it suffices for us to show the first formula
(\ref{guinand ryce popov paper}), as the arguments here presented
can be easily modified to give (\ref{guinand analogue I modified}).
Throughout this argument, we shall assume that $\text{Re}(\nu)>0$
and, for simplicity, that $\nu\neq\frac{k}{2}$. It is enough to prove
(\ref{guinand ryce popov paper}) under these simpler conditions because (\ref{guinand ryce popov paper})
is invariant under the reflection $\nu\leftrightarrow-\nu$ and the
extension to the point $\nu=\frac{k}{2}$ can be easily provided by
analytic continuation. Also, for simplicity in our notation, let
\begin{equation}
\eta_{k}(s):=\pi^{-s}\Gamma(s)\,\zeta_{k}\left(s\right).\label{definition once more etaK(s)}
\end{equation}

In this proof, we shall assume that $\sigma>\max\left\{ \frac{k}{2}+1+\text{Re}(\nu),k-1+\text{Re}(\nu)\right\} $.
By absolute convergence of the Dirichlet series defining $\zeta_{k}(s)$
in the region $\text{Re}(s)>\frac{k}{2}$, together with the Mellin
representation (\ref{mellin inverse 2f1 ryce}), we have 
\begin{align}
2\Gamma\left(\frac{k}{2}\right)(\pi y)^{1-\frac{k}{2}}\,\sum_{m,n=1}^{\infty}r_{k}(m)\,r_{k}(n)\,\left(\frac{m}{n}\right)^{-\frac{\nu}{2}}\left(m\,n\right)^{\frac{1}{2}-\frac{k}{4}}J_{\frac{k}{2}-1}\left(2\pi\sqrt{mn}\,y\right)\,K_{\nu}\left(2\pi\sqrt{mn}\,x\right)\nonumber \\
=\frac{x^{1-\frac{k}{2}}}{4\pi i}\,\intop_{\sigma-i\infty}^{\sigma+i\infty}\eta_{k}\left(\frac{s-\nu-1}{2}+\frac{k}{4}\right)\,\eta_{k}\left(\frac{s+\nu-1}{2}+\frac{k}{4}\right)\nonumber \\
\times\,_{2}F_{1}\left(\frac{s-\nu-1}{2}+\frac{k}{4},\frac{s+\nu-1}{2}+\frac{k}{4};\,\frac{k}{2};-\frac{y^{2}}{x^{2}}\right)x^{-s}\,ds.\label{starting point of proof of guinand ryce rppopov}
\end{align}

Considering the integral in (\ref{starting point of proof of guinand ryce rppopov}),
we shift its line of integration from $\text{Re}(s)=\sigma$ to $\text{Re}(s)=\frac{k}{2}-\sigma$
by integrating along a positively oriented rectangular contour with
vertices $\sigma\pm iT$ and $\frac{k}{2}-\sigma\pm iT$. Using Lemma \ref{lemma concluding remark popov} and the convex estimates for $\zeta_{k}(s)$, (\ref{convex estimates cases Popov paper zetak}), the integrals
along the horizontal segments, $\left[\frac{k}{2}-\sigma\pm iT,\,\sigma\pm iT\right]$
tend to zero as $T\rightarrow\infty$.

Since $\nu\neq\frac{k}{2}$ and $\text{Re}(\nu)>0$ by hypothesis,
we find that the integrand in (\ref{starting point of proof of guinand ryce rppopov})
contains four simple poles inside the rectangular contour $\left[\sigma\pm iT,\frac{k}{2}-\sigma\pm iT\right]$.
In fact, these poles are located at the points
\[
\rho_{1}=1+\nu-\frac{k}{2},\,\rho_{2}=\frac{k}{2}+1+\nu,\,\rho_{3}=1-\frac{k}{2}-\nu,\,\rho_{4}=\frac{k}{2}+1-\nu,
\]
and their simplicity is due to the conditions $\nu\neq\frac{k}{2}$ and $\text{Re}(\nu)>0$.
Let $\mathscr{R}_{j}(x)$, $j=1,...,4$, denote the respective residues of $\rho_{j}$, $j=1,...,4$:
from simple calculations, we have that 
\[
\mathscr{R}_{1}(x)=-2\eta_{k}\left(\nu\right)x^{\frac{k}{2}-\nu-1},
\]
\begin{align*}
\mathscr{R}_{2}(x) & =2\eta_{k}\left(\frac{k}{2}+\nu\right)\left(\frac{x}{2}\right)^{-\frac{k}{2}-1-\nu}\,_{2}F_{1}\left(\frac{k}{2},\frac{k}{2}+\nu;\frac{k}{2};-\frac{y^{2}}{x^{2}}\right)=2\eta_{k}\left(\frac{k}{2}+\nu\right)\,x^{\frac{k}{2}+\nu-1}\,\left(x^{2}+y^{2}\right)^{-\frac{k}{2}-\nu}\\
 & =2\eta_{k}\left(-\nu\right)\,x^{\frac{k}{2}+\nu-1}\,\left(x^{2}+y^{2}\right)^{-\frac{k}{2}-\nu},
\end{align*}
where the last step is just an application of the functional equation
of $\zeta_{k}(s)$, (\ref{functional equation zetak}). Analogously,
\[
\mathscr{R}_{3}(x)=-2\eta_{k}\left(-\nu\right)\,x^{\frac{k}{2}+\nu-1}
\]
and
\[
\mathscr{R}_{4}(x)=2\eta_{k}\left(\frac{k}{2}-\nu\right)\,x^{\frac{k}{2}-\nu-1}\,\left(x^{2}+y^{2}\right)^{-\frac{k}{2}+\nu}=2\eta_{k}\left(\nu\right)\,x^{\frac{k}{2}-\nu-1}\,\left(x^{2}+y^{2}\right)^{-\frac{k}{2}+\nu}.
\]
Using Cauchy's residue theorem and the calculations performed in the
previous lines, we find that (\ref{starting point of proof of guinand ryce rppopov})
implies the formula   
\begin{align}
2\Gamma\left(\frac{k}{2}\right)(\pi y)^{1-\frac{k}{2}}\sum_{m,n=1}^{\infty}r_{k}(m)\,r_{k}(n)\,\left(\frac{m}{n}\right)^{-\frac{\nu}{2}}\left(mn\right)^{\frac{1}{2}-\frac{k}{4}}J_{\frac{k}{2}-1}\left(2\pi\sqrt{mn}\,y\right)\,K_{\nu}\left(2\pi\sqrt{mn}\,x\right)\nonumber \\
=\frac{x^{1-\frac{k}{2}}}{4\pi i}\,\intop_{\frac{k}{2}-\sigma-i\infty}^{\frac{k}{2}-\sigma+i\infty}\eta_{k}\left(\frac{s-\nu-1}{2}+\frac{k}{4}\right)\,\eta_{k}\left(\frac{s+\nu-1}{2}+\frac{k}{4}\right)\nonumber \\
\times\,_{2}F_{1}\left(\frac{s-\nu-1}{2}+\frac{k}{4},\frac{s+\nu-1}{2}+\frac{k}{4};\,\frac{k}{2};-\frac{y^{2}}{x^{2}}\right)x^{-s}\,ds\nonumber \\
+\eta_{k}(\nu)\,x^{-\nu}\left\{ \left(x^{2}+y^{2}\right)^{\nu-\frac{k}{2}}-1\right\} +\eta_{k}(-\nu)\,x^{\nu}\left\{ \left(x^{2}+y^{2}\right)^{-\nu-\frac{k}{2}}-1\right\} ,\label{after the boring residues}
\end{align}
which is now in the right shape for us to perform the final transformations. Using the functional
equation for $\eta_{k}(s)$ and Euler's formula (\ref{Euler formula}),
the integral on the right-hand side of (\ref{after the boring residues})
can be written as 
\begin{align}
\intop_{\frac{k}{2}-\sigma-i\infty}^{\frac{k}{2}-\sigma+i\infty}\eta_{k}\left(\frac{s-\nu-1}{2}+\frac{k}{4}\right)\,\eta_{k}\left(\frac{s+\nu-1}{2}+\frac{k}{4}\right)\,_{2}F_{1}\left(\frac{s-\nu-1}{2}+\frac{k}{4},\frac{s+\nu-1}{2}+\frac{k}{4};\,\frac{k}{2};-\frac{y^{2}}{x^{2}}\right)x^{-s}\,ds\nonumber \\
=x^{-\frac{k}{2}}\,\left(\frac{x^{2}+y^{2}}{x^{2}}\right)^{1-\frac{k}{2}}\,\intop_{\sigma-i\infty}^{\sigma+i\infty}\eta_{k}\left(\frac{s+\nu+1}{2}\right)\,\eta_{k}\left(\frac{s-\nu+1}{2}\right)\,_{2}F_{1}\left(\frac{s+\nu+1}{2},\frac{s-\nu+1}{2};\,\frac{k}{2};-\frac{y^{2}}{x^{2}}\right)\left(\frac{x}{x^{2}+y^{2}}\right)^{-s}\,ds\nonumber \\
=\frac{8\pi i\,x^{\frac{k}{2}-1}}{x^{2}+y^{2}}\,\Gamma\left(\frac{k}{2}\right)(\pi y)^{1-\frac{k}{2}}\,\sum_{m,n=1}^{\infty}r_{k}(m)\,r_{k}(n)\,\left(\frac{m}{n}\right)^{-\frac{\nu}{2}}\left(m\,n\right)^{\frac{1}{2}-\frac{k}{4}}J_{\frac{k}{2}-1}\left(\frac{2\pi\sqrt{mn}\,y}{x^{2}+y^{2}}\right)\,K_{\nu}\left(\,\frac{2\pi\sqrt{mn}\,x}{x^{2}+y^{2}}\right),\label{at last after all that}
\end{align}
where in the last step we have used (\ref{starting point of proof of guinand ryce rppopov})
with $x$ and $y$ respectively replaced by $x/(x^{2}+y^{2})$ and
$y/(x^{2}+y^{2})$. Note that this last step is actually valid because,
by our initial choice of $\sigma$, we know that $\sigma>k-1+\text{Re}(\nu)$.
Using (\ref{at last after all that}) in (\ref{after the boring residues})
yields the desired identity (\ref{guinand ryce popov paper}).
\end{proof}

\textit{Acknowledgements:} This work was partially supported by CMUP, member of LASI, which is financed by national funds through FCT - Fundação para a Ciência e a Tecnologia, I.P., under the projects with reference UIDB/00144/2020 and UIDP/00144/2020. We also acknowledge the support from FCT (Portugal) through the PhD scholarship 2020.07359.BD. The author is grateful to Professor Maxim Korolev for noticing a typo in reference [13] of the previous version and for kindly sending articles about Popov's life and work. 
The author would also like to thank to Semyon Yakubovich for unwavering support and guidance throughout the writing of this paper.

\footnotesize

\end{document}